\renewcommand\expandafter\subsection\expandafter{%
    \expandafter\@fb@secFB\subsection
  }%
\numberwithin{equation}{section}
\newlist{extralist}{enumerate}{1}
\newlist{tlist}{enumerate}{1}
\setlist{
  font = \normalfont,%
  itemindent = 0pt,%
  leftmargin = *,%
  topsep = 0pt,%
}
\setlist[tlist]{%
  label = {(\roman{*})},%
}
\setlist[extralist]{
  label = (\arabic{*}),%
}
\crefname{enumi}{}{}
\theoremstyle{plain}
\newtheorem{proposition}{Proposition}[section]
\newtheorem{theorem}[proposition]{Theorem}
\newtheorem{lemma}[proposition]{Lemma}
\newtheorem{example}[proposition]{Example}
\newtheorem{remark}[proposition]{Remark}
\theoremstyle{nonumberplain}
\newtheorem{proof}{Proof}
\theoremstyle{empty}
\newtheoremstyle{named}{}{%
\item[%
  \hskip\labelsep%
  \theorem@headerfont%
  ##3%
  \theorem@separator%
  ]%
}
\theoremstyle{named}
\newtheorem{named}{}
\crefname{equation}{}{}
\crefname{tlisti}{}{}
\crefname{extralisti}{}{}
\crefname{page}{page}{pages}
\newcommand{\cL}{\mathcal{L}}
\newcommand{\bbP}{\mathbb{P}}
\newcommand{\bbN}{\mathbb{N}}
\newcommand{\bbZ}{\mathbb{Z}}
\newcommand{\bbR}{\mathbb{R}}
\DeclareMathOperator*{\argmin}{argmin}
\newcommand{\1}{\mathds{1}}
\providecommand\given{}
\DeclarePairedDelimiterX\Set[1]\{\}{%
  \renewcommand\given{\SetSymbol[\delimsize]}
  #1
}
\newcommand{\argmrk}{\cdot}
\DeclarePairedDelimiterXPP\Expt[1]{\mathbb{E}}[]{}{
  \renewcommand\given{\nonscript\:\delimsize\vert\nonscript\:\mathopen{}}
  #1}
\DeclarePairedDelimiterXPP\abs[1]{\mathop{}}\lvert\rvert{}{#1}
\DeclarePairedDelimiterXPP{\norm}[1]{\mathop{}}{\lVert}{\rVert}{}{%
  \ifblank{#1}{\argmrk}{#1}%
}
\DeclarePairedDelimiter{\floor}{\lfloor}{\rfloor}
\DeclarePairedDelimiter{\iprod}{\langle}{\rangle}
\newcommand{\di}{\textup{d}}
\newcommand{\isp}{\,}
\newcommand{\Di}{\isp\di}
\DeclareMathOperator{\var}{Var}
\DeclareMathOperator{\cov}{Cov}
\newcommand{\I}{\mathrm{i}}
\newcommand{\e}{\mathrm{e}}
\newcommand{\asTo}[1][]{\xrightarrow[#1]{\text{\tiny a.s.}}}
\newcommand{\distTo}[1][]{\xrightarrow[#1]{\smash{\,\scriptscriptstyle\mathcal{L}\,}}}
\newcommand{\fidiTo}[1][]{\xrightarrow[#1]{\scriptscriptstyle\textup{fidi}}}
\newcommand{\qtq}[1]{\quad\text{#1}\quad}
\newcommand{\iffx}{\quad\Longleftrightarrow\quad}
\newcommand\thefontsize{The current font size is: \f@size pt}
\begin{document}
\title{Multi-Dimensional Parameter Estimation of Heavy-Tailed Moving
  Averages}

\author{%
  Mathias Mørck Ljungdahl%
  \\
  \itshape Department of Mathematics, Aarhus University
  \\[2em]  
  Mark Podolskij
  \\
  \itshape Department of Mathematics, University of Luxembourg
  \\
}
\date{}
\maketitle

\begin{abstract}
  \noindent In this paper we present a parametric estimation method
  for certain multi-parameter heavy-tailed Lévy-driven moving
  averages. The theory relies on recent multivariate central limit
  theorems obtained via Malliavin calculus on Poisson spaces. Our
  minimal contrast approach is related to previous papers, which
  propose to use the marginal empirical characteristic function to
  estimate the one-dimensional parameter of the kernel function and
  the stability index of the driving Lévy motion. We extend their
  work to allow for a multi-parametric framework that in particular
  includes the important examples of the linear fractional stable
  motion, the stable Ornstein--Uhlenbeck process, certain
  CARMA($2, 1$) models and Ornstein--Uhlenbeck processes with a
  periodic component among other models. We present both the
  consistency and the associated central limit theorem of the
  minimal contrast estimator. Furthermore, we demonstrate numerical
  analysis to uncover the finite sample performance of our method.

  \bigbreak
  
  \noindent \textit{Keywords}: Heavy tails, Lévy processes, limit
  theorems, low frequency, parametric estimation
\end{abstract}

\section{Introduction}

Steadily through the last decades estimation procedures for various
classes of continuous time moving averages and related processes have
been proposed, see, e.g. \cite{AyacLine,GrahScal, MazuEsti} for
estimation of the parameters in the linear fractional stable motion
model and \cite{DahlEffi,DangEsti} for the more general class of
self-similar processes among many others. The bedrock of these
techniques are of course the underlying limit theory for various
functionals of the processes~at~hand. One such seminal paper is
\cite{PipiCent}, which gives conditions for bounded functionals of a
large class of moving averages and was later extended in
\cite{PipiBoun} to certain unbounded~functions. In~a similar framework
\cite{BassPowe} gives an almost complete picture of the \enquote{law
  of large numbers} for the classical case of the power variation
functionals. The article \cite{BassOnLi} extends the functionals from
power variation to a large class of statistically interesting
functionals and for a class of symmetric $\beta$-stable moving
averages. This paper also provides an almost complete picture of the
corresponding weak limit theorems, at least in the setting of Appell
rank $> 1$ (such~as is the case for power variation and the (real
part) of the characteristic function).

Previous estimation methods suggested in
\cite{LjunAMin,LjunANot,MazuEsti} relied on functionals of the
one-dimensional marginal law of the process and specific properties of
the process at hand. Since~the marginal distribution of the considered
models have been symmetric $\beta$-stable, only the scale and the
stability parameters can be estimated via such statistics. In
particular, they are typically not sufficient to estimate kernel
functions that depend on a multi-dimensional parameter, which excludes
many interesting~models. Indeed, this discrepancy is observed in
\cite{LjunAMin}, where the characteristic function of the
one-dimensional law is not sufficient and instead the authors have to
rely on a combination with other statistics to ensure estimation of
all parameters.

The aim of this paper is to construct estimators of the kernel
function and the stability index in the general setting of a
multi-dimensional parameter space. Instead of relying on existing
theory \cite{BassOnLi,BassPowe,PipiCent}, which only accounts for the
marginal law of the underlying model, we shall use the framework from
the recent paper~\cite{AzmoMult}, which is tailor-made for the study
of Gaussian fluctuations of functionals of multiple heavy-tailed
moving averages, to estimate the multi-dimensional parameter. While
our approach is similar in spirit to the univariate framework of
\cite{LjunAMin,LjunANot} from the theoretical viewpoint, there are
some important differences. First of all, the parameter
identifiability and non-degeneracy condition
(see~Condition~(A)\cref{it:ass:A:4} below) are not trivial in the
multi-dimensional setting and we demonstrate the corresponding
techniques for various examples.  Secondly, the theoretical results of
our paper shed light on parameter estimation for a large class of
models; in particular, we~will present statistical inference for a
certain CARMA($2, 1$) model, which is novel in the
literature. Finally, we~remark that while \cref{thm:MCE-conv} is a
multivariate extension of \cite[Proposition~1]{LjunANot}, its~proof is
somewhat more complex than in the univariate setting.

Let us now define the class of moving average processes for which the
underlying limit theory applies. Let $L = (L_t)_{t \in \bbR}$ be a
standard symmetric $\beta$-stable Lévy process and consider the
model
\begin{equation}
  \label{eq:MA}
  X_t = \int_{-\infty}^{t} g(t - s) \Di L_s, \qquad t \in \bbR,
\end{equation}
for some measurable $g : \bbR \to \bbR$. Necessary and sufficient
conditions for the integral to exist are given in \cite{RajpSpec} and
we mention that in our setting a sufficient condition is
$\int_{\bbR} \abs{g(s)}^\beta \Di s < \infty$. The kernel function $g$
is assumed to have a power behaviour around $0$ and at
infinity. More~specifically, we shall assume the existence of a
constant $K > 0$ together with powers $\alpha > 0$ and
$\kappa \in \bbR$ for which it holds
\begin{equation}
  \label{eq:kernel-ass}
  \abs{g(x)} \leq K \bigl(x^{\kappa} \1_{[0, 1)}(x) + x^{-\alpha} \1_{[1, \infty)}(x) \bigr)
  \quad \text{for all $x \in \bbR$.}
\end{equation}
We are interested in (scaled) partial sums of multivariate functionals
of the vectors
$((X_{s + 1},\allowbreak \ldots,\allowbreak X_{s + m}))_{s \geq 0}$:
\begin{equation}
  \label{eq:main-statistic}
  V_n(X; f) = \frac{1}{\sqrt{n}} \sum_{s = 0}^{n-m} \left(f(X_{s + 1}, \ldots, X_{s + m}) - \Expt{f(X_1, \ldots, X_m)} \right),
\end{equation}
where $f : \bbR^m \to \bbR^d$ is a suitable Borel function. Adhering
to \cite[Remark~2.4(iii)]{AzmoMult} the following result holds. Below
$C_b^2(\bbR^m, \bbR^d)$ denotes the space of twice differentiable
functions $f:\bbR^m \to \bbR^d$ such that $f$ and all of its first and
second order derivatives are bounded and continuous.

\begin{theorem}[{{\cite[Theorem~2.3]{AzmoMult}}}]%
  \label{thm:MA-CLT}%
  Let $(X_t)_{t \in \bbR}$ be a moving average as in \cref{eq:MA} with
  kernel function $g$ satisfying \cref{eq:kernel-ass}. Assume that
  $\alpha \beta > 2 $ and $\kappa > - 1/\beta$. Let
  $f = (f_1, \ldots, f_d) \in C_b^2(\bbR^m, \bbR^d)$ and consider the
  statistic $V_n(X; f)$ introduced at \cref{eq:main-statistic}. Then
  as $n \to \infty$
  \begin{equation}
    \label{eq:asymp-cov}
    \Sigma_n^{i,j} \coloneqq \cov(V_n(X; f)) \to \Sigma^{i, j}
    \coloneqq \sum_{s \in \bbZ} \cov(f_i(X_{s + 1}, \ldots, X_{s + m}), f_j(X_1, \ldots, X_m))
  \end{equation}
  for any $1\leq i,j \leq d$. Moreover, $V_n(X; f) \distTo \mathcal{N}_d(0, \Sigma)$ as $n \to \infty$.
\end{theorem}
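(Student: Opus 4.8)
The plan is to apply the Malliavin--Stein method on the Poisson space, which is the framework behind \cite{AzmoMult}; here is a sketch of the main steps. First I would represent the symmetric $\beta$-stable driver $L$ through its jumps: let $N$ be the Poisson random measure on $\bbR\times\bbR$ with intensity $\mu(\di s,\di x)=\di s\,\nu(\di x)$, $\nu(\di x)=c_\beta\abs{x}^{-1-\beta}\di x$, and (using the symmetry of $L$) work with the compensated measure $\widetilde N=N-\mu$, so that
\[
  X_t=\int_{-\infty}^t\!\int_{\bbR} g(t-s)\,x\,\widetilde N(\di s,\di x).
\]
By \cref{eq:kernel-ass} and the stated exponents one has $\int_{\bbR}\abs{g(s)}^\beta\Di s<\infty$, so this integral is well defined, and for $f\in C_b^2(\bbR^m, \bbR^d)$ the vector $V_n=V_n(X;f)$ is a bounded Poisson functional lying in the domain of the difference (add-one-cost) operator $D$. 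The decisive structural point is that $X_t$ is \emph{linear} in the Poisson input: inserting an extra jump of size $y$ at time $u$ changes $X_t$ by $g(t-u)\,y\,\1_{\{u\le t\}}$, so $D_{(u,y)}X_t=g(t-u)\,y\,\1_{\{u\le t\}}$ and the iterated difference $D_{(u_1,y_1)}D_{(u_2,y_2)}X_t$ vanishes. Consequently the first difference $D_z f(X_{s+1},\ldots,X_{s+m})$ is controlled by $\|\nabla f\|_\infty$ times a single kernel factor (and globally by $2\|f\|_\infty$), while the second difference $D_{z_1}D_{z_2} f(X_{s+1},\ldots,X_{s+m})$ is controlled by $\|\nabla^2 f\|_\infty$ times a product of two kernel factors.

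For the covariance asymptotics, write $\Sigma_n^{i,j}=\frac{1}{n}\sum_{s,t=0}^{n-m}\cov\bigl(f_i(X_{s+1},\ldots,X_{s+m}),f_j(X_{t+1},\ldots,X_{t+m})\bigr)$ and use the Poincaré-type covariance bound on the Poisson space, $\abs{\cov(F,G)}\le\Expt*{\int\abs{D_zF}\,\abs{D_zG}\,\mu(\di z)}$, together with the estimates above. The integral over the jump size uses $\int(\abs{x}^2\wedge 1)\,\nu(\di x)<\infty$ (valid since $\beta<2$), and the integral over the jump time, combined with the kernel bound \cref{eq:kernel-ass}, shows that $\abs{\cov(f_i(X_{s+1},\ldots,X_{s+m}),f_j(X_{t+1},\ldots,X_{t+m}))}$ decays at least like $\abs{s-t}^{-\alpha\beta/2}$ as $\abs{s-t}\to\infty$; the hypothesis $\alpha\beta>2$ is precisely what makes this power summable (and $\kappa>-1/\beta$ what keeps the contribution near the kernel singularity finite). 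Hence $\sum_{k\in\bbZ}\abs{\cov(f_i(X_{k+1},\ldots,X_{k+m}),f_j(X_1,\ldots,X_m))}<\infty$, so $\Sigma$ is finite, and a Cesàro/dominated-convergence argument yields $\Sigma_n^{i,j}\to\Sigma^{i,j}$.

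Finally I would invoke the quantitative multivariate normal approximation bound for vectors of Poisson functionals — the second-order Poincaré / Malliavin--Stein estimate used in \cite[Remark~2.4(iii)]{AzmoMult} — which dominates the distance between $V_n$ and $\cN_d(0,\Sigma_n)$, in a suitable smooth probability metric, by a finite combination of terms built from the first and second differences of the components of $V_n$, of the schematic form
\[
  \Bigl(\,\int\!\!\int\bigl(\Expt{(D_{z_1}F)^2(D_{z_2}F)^2}\bigr)^{1/2}\bigl(\Expt{(D_{z_1}D_{z_2}F)^4}\bigr)^{1/2}\,\mu(\di z_1)\,\mu(\di z_2)\Bigr)^{1/2}
\]
and analogous expressions. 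Substituting the estimates from the first step — each first difference carries one kernel factor and is globally bounded, each second difference carries two — and summing over the $O(n)$ pairs of overlapping blocks, each such term tends to $0$ as $n\to\infty$ (indeed it is $O(n^{-1/2})$ up to logarithmic factors), using $\alpha\beta>2$ and $\beta<2$ once more to make the $z$-integrals converge. Combined with $\Sigma_n\to\Sigma$ this gives $V_n(X;f)\distTo\cN_d(0,\Sigma)$.

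The hard part is this last step. One works with a triangular array whose number of summands grows linearly in $n$, so the per-pair contributions to the Malliavin--Stein bound must be summed and shown to beat the $1/\sqrt n$ normalisation; this forces one to track the joint decay, in the lag $\abs{s-t}$, of mixed quantities such as $\Expt{(D_z f(X_{s+1},\ldots))^2(D_z f(X_{t+1},\ldots))^2}$ and their second-difference analogues, and then to integrate these against the heavy-tailed intensity $\nu$ near the origin and at infinity. The precise balance among the kernel tail exponent $\alpha$, the stability index $\beta$, and the $C^2$-smoothness of $f$ is exactly what the standing assumptions $\alpha\beta>2$ and $\kappa>-1/\beta$ are calibrated to accommodate.
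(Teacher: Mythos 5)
This theorem is not proved in the paper at all: it is imported verbatim as \cite[Theorem~2.3]{AzmoMult} (via \cite[Remark~2.4(iii)]{AzmoMult}), so there is no in-paper argument to compare yours against. What you have written is a plausible reconstruction of the strategy of the \emph{cited} work --- Malliavin--Stein / second-order Poincar\'e bounds on the Poisson space, exploiting the linearity of $X_t$ in the Poisson input so that first differences carry one kernel factor and second differences carry two, with $\alpha\beta>2$ delivering summable covariance decay of order $\abs{s-t}^{-\alpha\beta/2}$ and $\kappa>-1/\beta$ controlling the singularity of $g$ at the origin. Those structural points are all correctly identified, and the identification of where each hypothesis enters is accurate.

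As a proof, however, this remains a sketch, and you say so yourself: the entire normal-approximation step --- verifying that the $\gamma$-type terms in the multivariate Malliavin--Stein bound are $o(1)$ after summing over the $O(n)$ overlapping blocks and integrating the mixed first/second-difference moments against the heavy-tailed intensity near $0$ and at $\infty$ --- is asserted rather than carried out, and that is precisely the technical content of \cite{AzmoMult}. One smaller inaccuracy: the covariance inequality you invoke, $\abs{\cov(F,G)}\leq \Expt{\int \abs{D_zF}\,\abs{D_zG}\,\mu(\di z)}$, is not the standard Poisson-space bound; the correct form is $\abs{\cov(F,G)}\leq \int (\Expt{(D_zF)^2})^{1/2}(\Expt{(D_zG)^2})^{1/2}\,\mu(\di z)$, obtained via Mehler's formula and the contractivity of the Ornstein--Uhlenbeck semigroup. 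Since your subsequent estimates bound the differences deterministically anyway, this does not derail the covariance-convergence argument, but the inequality as stated should not be cited as a known result. In the context of this paper the right move is simply to cite \cite{AzmoMult}, as the authors do; if you intend to supply an independent proof, the missing quantitative step is the substance, not a formality.
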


\noindent The paper \cite{AzmoMult} additionally provides
Berry--Esseen type bounds for an appropriate distance between
probability laws on $\bbR^d$, but \cref{thm:MA-CLT} is sufficient for
our statistical analysis. We remark that the limit theory for bounded
$f$ in the case of $m = 1$ and general $d \in \bbN$ is handled in
\cite{PipiBoun}, but it is actually the reverse situation, i.e.
$m \in \bbN$ and $d = 1$, that is the most needed. Specifically, $f$
will be the empirical characteristic function of the joint
distribution $(X_{s + 1}, \ldots, X_{s + m})$, which then grants us
the ability to estimate parameters which are not determined by the
one-dimensional distribution of $X_1$, see
\crefrange{ex:ornstein-uhlenbeck}{ex:CARMA} below.

The paper is organized as follows. In \cref{sec2} we introduce the
parametric model, numerous assumptions and
the main theoretical results of the paper, which show the strong
consistency and the asymptotic normality of the minimal contrast
estimator. \Cref{sec3} is devoted to a numerical analysis of the
finite sample performance of our estimator. Finally, all proofs are
collected in \cref{sec4}.

\section{The setting and main results} \label{sec2}

\subsection{The model and assumptions}

In the following we will consider a Lévy-driven moving average
$X = (X_t)_{t \in \bbR}$ given by
\begin{equation}
  \label{eq:MA-parametric}
  X_t = \int_{\bbR} g_{\beta, \theta}(t - s) \Di L_s, \qquad t \in \bbR,
\end{equation}
where $L$ is a symmetric $\beta$-stable Lévy process with unit scale
and $\beta \in \Upsilon$ for some open subset
$\Upsilon \subseteq (0, 2)$, and
$\Set{g_{\beta, \theta} \given \beta \in \Upsilon, \theta \in \Theta}$
is a measurable family of functions parametrized by an open subset
$\Upsilon \times \Theta \subseteq (0, 2) \times \bbR^d$ for some
$d \geq 1$. For ease of notation we shall often denote the joint
parameter with $\xi = (\beta, \theta)$ and the open subset by
$\Xi = \Upsilon \times \Theta$.

The main goal of this section is to extend the theory of
\cite{LjunANot} from a one-dimensional parameter space, i.e. $d = 1$,
to a general multi-dimensional theory. Such multi-dimensional
parameter spaces include important examples of the linear fractional
stable motion, the~stable Ornstein--Uhlenbeck process, certain
CARMA($2, 1$) models, and Ornstein--Uhlenbeck processes with a
periodic component among others. One of the main difficulties in
extending from $d = 1$ to $d \in \bbN$ is that, quite naturally,
the~parameters $(\beta, \theta)$ should be identifiable from the
(theoretical) statistic, which in the case of \cite{LjunANot} is the
one-dimensional characteristic function:
\begin{equation*}
  \phi_{\beta, \theta}(u) = \Expt{\e^{\I u X_1}}
  = \exp(-\norm{u g_{\beta, \theta}}_{\beta}^{\beta}).
\end{equation*}
This identification can very well be an unreasonable assumption if
$d > 1$, see \cref{ex:ornstein-uhlenbeck}. But if we instead consider
the characteristic function of the joint distribution
$(X_1, \ldots, X_m)$,
\begin{equation}
  \label{eq:joint-char-maps}
  \varphi^m_{\beta, \theta}(u_1, \ldots, u_m) = \Expt[\big]{\e^{\I \sum_{k = 1}^{m} u_k X_k}}
  = \exp\Bigl(-\norm[\Big]{\sum_{k = 1}^{m} u_k g_{\beta, \theta}(\argmrk + k)}_{\beta}^{\beta}\Bigr),
\end{equation}
such an identification may be possible. Let us discuss this in more
details. The~underlying stability index $\beta$ is always identifiable
from \cref{eq:joint-char-maps}, since the stability index of a stable
random variable is unique. The problem is then reduced to whether the
parametrization of the kernel $\theta \mapsto g_{\beta, \theta}$
specifies the distribution of $X$ uniquely. The question now becomes a
matter of uniqueness for the spectral representation of moving
averages, which has been studied in, e.g. \cite{RosiOnUn}. Translating
the question to the characteristic functions of the finite dimensional
distributions, $(X_1, \ldots, X_m)$, $m \in \bbN$, we ask whether the
$\beta$-norm of linear combinations of translations of the kernel
specifies $g_{\beta, \theta}$ uniquely. This is known as
\emph{Kanter's theorem} in the literature and first appeared in
\cite{KantLpNo}, but for exposition sake let us repeat it
here. Suppose~$\beta \in (0, \infty)$ is not an even integer and let
$g, h \in \cL^\beta(\bbR)$. Then Kanter's~theorem states that if for
all $n \in \bbN$ and $u_1, t_1, \ldots, u_n, t_n \in \bbR$ it holds
that
\begin{equation*}
  \norm[\Big]{\sum_{i = 1}^{n} u_i g(\argmrk + t_i)}_\beta^\beta
  = \norm[\Big]{\sum_{i = 1}^{n} u_i h(\argmrk + t_i)}_\beta^\beta,
\end{equation*}
then there exists an $\epsilon \in \Set{\pm 1}$ and a $\tau \in \bbR$
such that $g = \epsilon h(\argmrk + \tau)$ almost everywhere.
Kanter's~theorem then implies that the distribution of $X$ is the same
under $\theta$ and $\theta'$ if and only if there exists
$\epsilon \in \Set{\pm 1}$ and $\tau \in \bbR$ such that
\begin{equation*}
  \epsilon g_{\beta, \theta}(\argmrk + \tau) = g_{\beta, \theta'} \qquad \text{almost everywhere.}
\end{equation*}
For many concrete examples of the kernel family
$\Set{g_\xi \given \xi \in \Xi}$ it is often straightforward to check
that such an identity only occurs if $\epsilon = 1$, $\tau = 0$ and
$\theta = \theta'$.

Due to the preceding discussion it is reasonable to make the following
assumptions on the family of kernels and we note that similar
identification requirements are often explicitly or implicitly
required in the literature. An important remark is that our theory
allows for a general $m \in \bbN$ instead of only $m \in \Set{1, 2}$,
where the statistics in the case $m = 2$ are often
autocorrelations. We denote by $\partial_{\smash{\xi}}^{i, j} f_\xi$
the partial derivative of $f$ with respect to the $i$th and the $j$th
parameters $\xi_i$ and $\xi_j$ evaluated at $\xi \in \Xi$. Note in
particular that $\partial_{\smash{\xi}}^1$ is the derivative with
respect to $\beta$ and $\partial_\xi^i$ for $i \geq 2$ is the
derivative with respect to the $\theta_i$-coordinate according to the
notational convention given below \cref{eq:MA-parametric}.
\begin{named}[Assumption (A)]
  \label{ass:A}
  There exists an $m \in \bbN$ such that:
  \begin{extralist}[label = (\arabic*)]
  \item\label{it:ass:A:1}
    $0 < \norm{g_{\beta, \theta}}_\beta < \infty$ for all
    $(\beta, \theta) \in \Upsilon \times \Theta$.

  \item\label{it:ass:A:2} The map
    $\theta \mapsto \smash{\varphi_{\beta, \theta}^m}$ given in
    \cref{eq:joint-char-maps} is injective.

  \item\label{it:ass:A:3} The function
    $(\beta, \theta) \mapsto \norm{\sum_{i = 1}^{m} u_i g_{\beta,
        \theta}(\argmrk + i)}_{\beta}^{\beta}$ is
    $C^2(\Upsilon \times \Theta)$ for each
    $u_1, \ldots, u_m \in \bbR$.

  \item\label{it:ass:A:4}
    $u \mapsto \partial_{\xi}^1 \varphi^m_{\xi}(u), \partial_{\xi}^2
    \varphi_{\xi}^m(u), \ldots, \partial_{\xi}^{d + 1} \varphi^m_{\xi}(u)$
    are linearly independent continuous functions.
  \end{extralist}
\end{named}

\noindent Let us give some remarks about the imposed conditions.

\begin{remark}
  \label{rem:ass:A}%
  \leavevmode%
  \begin{enumerate}[label = (\roman{*})]
  \item\label{it:rem:ass:A:1} The assumption (A)\cref{it:ass:A:1} is a
    necessary and sufficient condition for $X$ to be well-defined and
    non-degenerate. Moreover, (A)\cref{it:ass:A:1} makes it apparent
    why an explicit dependence on $\beta$ of the kernel
    $g_{\beta, \theta}$ could be useful. This case of dependence is
    also necessary for some processes such as increments of the linear
    fractional stable motion, see~\cref{ex:lfsm} below.

  \item\label{it:rem:ass:A:2} Condition~(A)\cref{it:ass:A:2} is
    necessary to ensure that the model \cref{eq:MA-parametric} is
    parametrized properly. Note that the non-existence of an
    $m \in \bbN$ such that (A)\cref{it:ass:A:2} holds would imply that
    the parameters could never be inferred from any \emph{finite} data
    sample making the inference of $\theta$ impossible in
    practice. The identification of the parameters in a continuous
    time model from samples at equidistant time points is known in the
    literature as the \emph{aliasing problem}.

  \item\label{it:rem:ass:A:3} Condition (A)\cref{it:ass:A:3} is a
    minimal requirement for our method of proof (see also
    \cite[Assumption~(A)]{LjunANot}). In particular, it ensures
    existence of the derivatives in (A)\cref{it:ass:A:4}.

  \item\label{it:rem:ass:A:4} We note here that under
    assumption~(A)\cref{it:ass:A:1} 
    condition (A)\cref{it:ass:A:4} follows from linear independence of the
    smaller subset:
    $u \mapsto \partial_{\xi}^2 \varphi_{\xi}^m(u), \ldots,
    \partial_{\xi}^{d + 1} \varphi^m_{\xi}(u)$. Indeed, this follows
    from the particular form of the $\beta$-derivative as shown in
    \cref{sec:remark-proof}.
  \end{enumerate}
\end{remark}

\noindent In order to use \cref{thm:MA-CLT} we need to make additional
assumptions on our kernel and for this we need to introduce some more
notation. Consider a strictly positive weight function
$w \in \cL^1(\bbR^m_+)$ and define the weighted inner product and
norms
\begin{equation*}
  \iprod{g, h}_w = \int_{\bbR_+^m} g(x) h(x) w(x) \Di x \qtq{and}
  \norm{h}_{w, p}^p = \int_{\bbR_+^m} \abs{h(x)}^p w(x) \Di x, \qquad p \in \Set{1, 2}.
\end{equation*}
Let $\cL^p_w(\bbR^m_+)$ denote the corresponding Banach $\cL^p$-space
of Borel functions.

\begin{named}[Assumption~(B)]
  \label{ass:B}
  \leavevmode
  \begin{extralist}
  \item\label{it:ass:B:1} Assume that for all
    $(\beta, \theta) \in \Upsilon \times \Theta$ there exist
    $\kappa \in \bbR$ and $\alpha > 0$ such that $\kappa > -1/\beta$,
    $\alpha \beta > 2$ and \cref{eq:kernel-ass} holds for
    $g_{\beta, \theta}$.

  \item\label{it:ass:B:2} The functions
    $u \mapsto \abs{\partial_{\smash{\xi}}^{i, k} \varphi_{\xi}(u)},
    \abs{\partial_{\smash{\xi}}^{i} \varphi_{\xi}(u)}$,
    $i, k \in \Set{1, \ldots, d + 1}$, are locally dominated in
    $\cL_w^2(\bbR_+^m)$. That is, there exists for all $\xi \in \Xi$ a
    neighbourhood $\Xi_0 \ni \xi$ such that the supremum of these
    functions over $\xi \in \Xi_0$ are dominated by a function in
    $\cL_w^2(\bbR_+^m)$.
  \end{extralist}
\end{named}

\noindent
Assumption~(B)\cref{it:ass:B:1} is imposed to ensure that we may
employ \cref{thm:MA-CLT}. While (B)\cref{it:ass:B:2} seems strict it
is always satisfied in the one-dimensional case $m = 1$ and we shall
need it to ensure validity of the implicit function theorem in our
setup.

We now demonstrate some examples, which satisfy Assumption~(A) for
$m \geq 2$ but not for $m = 1$.

\begin{example}[Stable Ornstein--Uhlenbeck process]
  \label{ex:ornstein-uhlenbeck}
  Let $(X_t)_{t \in \bbR}$ denote the $\beta$-stable
  Ornstein--Uhlenbeck process with parameter $\lambda > 0$ and scale
  parameter $\sigma > 0$. That is, $(X_t)_{t \in \bbR}$ is a
  stationary solution of the stochastic differential equation
  \begin{equation*}
    \Di X_t = -\lambda X_t \Di t + \sigma \Di L_t.
  \end{equation*}
  It has the representation \cref{eq:MA-parametric} with kernel
  function $g_\theta(u) = \sigma \exp(-\lambda u) \1_{(0, \infty)}(u)$
  and $\theta = (\sigma, \lambda) \in (0, \infty)^2$. It is clear that
  the one-dimensional characteristic function does not characterize
  the parameter $\theta$, hence Assumption~(A)\cref{it:ass:A:2} is not
  satisfied for $m = 1$. Consider therefore the case $m = 2$. Here the
  characteristic function is uniquely determined by $\theta$ if the
  $\beta$-norms are. Indeed, using the binomial series one may deduce
  the following formula:
  \begin{equation*}
    \norm{u_1 g_\theta + u_2 g_\theta(\argmrk + 1)}_\beta^\beta
    = \frac{\sigma^\beta}{\beta \lambda} \Bigl[u_2^\beta (1 - \exp(-\beta \lambda))
    + (u_1 + u_2 \exp(-\lambda))^\beta \Bigr], \qquad u_1 > u_2 \geq 0.
  \end{equation*}
  It is then straightforward to check that these equations in
  $u_1 > u_2 \geq 0$ determine $\theta \in (0,
  \infty)^2$~uniquely. Additionally, (A)\cref{it:ass:A:4} can be
  checked in a manner similar to \cref{ex:periodic-example} below and
  we refer to \cref{sec:example-proofs} for the derivation of these
  statements.
  
  There are a number of alternative estimation methods for a stable
  Ornstein--Uhlenbeck model. When the stability parameter $\beta$ is
  known, $\lambda$ can be estimated with convergence rate
  $(n/\log n)^{1/\beta}$ as it has been shown in \cite{ZhanLeas}. In
  the discrete-time setting of the AR(1) model with heavy-tailed
  i.i.d. noise, it is known that a Gaussian limit can be obtained, cf.
  \cite{LingSelf}, but this method again lacks joint estimation with
  the parameter $\beta$. In a similar framework the paper
  \cite{AndrMaxi} investigates the asymptotic behaviour of the maximum
  likelihood estimator. In particular, their results imply that the
  parameters $\sigma$ and $\beta$ can be estimated with a
  $\sqrt{n}$-precision, while the drift parameter $\lambda$ has a
  faster convergence rate of $n^{1/\beta}$.
\end{example}

\begin{example}[Linear fractional stable motion]
  \label{ex:lfsm}
  Let $(Y_t)_{t \in \bbR}$ be the linear fractional stable
  motion with self-similarity $H \in (0, 1)$, stability index
  $\beta \in (0, 2)$ and scale parameter $\sigma > 0$. That is,
  \begin{equation*}
    Y_t = \int_{\bbR} \sigma [(t - s)_+^{H - 1/\beta}  -(-s)_+^{H - 1/\beta}] \Di L_s.
  \end{equation*}
  Consider the low frequency $k$th order increment at rate $r$ 
  ($k, r \in \bbN$) defined as 
  \begin{equation*}
    \Delta_{i, k}^r Y = \sum_{j = 0}^{k} (-1)^j \binom{k}{j} Y_{i - rj}, \qquad i \geq r k.
  \end{equation*}
  For example, for $k = 2$ we have that
  \begin{equation*}
    \Delta_{i, 2}^r Y = Y_i - 2 Y_{i - r} + Y_{i - 2r}, \qquad i \geq 2 r.
  \end{equation*}
  Our example process $X$ will be the $k$th increments at rate
  $r = 1$, i.e. $X_i \coloneqq \Delta_{i, k}^1 Y$. The~corresponding
  kernel of the process $(X_i)$ for a general $k$ is given by
  \begin{equation*}
    g_{\beta, H, \sigma}(u) \coloneqq \sum_{j = 0}^{k} (-1)^j \binom{k}{j} (u - j)_+^{H - 1/\beta},
  \end{equation*}
  where $x_+ = x \vee 0$ is the positive part and $x_+^a \coloneqq 0$
  for all $x \leq 0$. We note the asymptotic behaviour
  \begin{equation*}
    \frac{g_{\beta, H, \sigma}(u)}{K u^{H - 1/\beta - k}} \longrightarrow 1 \qquad \text{as $u \to \infty$}
  \end{equation*}
  for some constant $K > 0$ depending on $\alpha$, $H$ and $k$. Hence
  the kernel $g_{\beta, H, \sigma}$ for $X$ always satisfies the
  assumption~\cref{eq:kernel-ass} with $\kappa \coloneqq H - 1/\beta$
  and $\alpha \coloneqq k + 1/\beta - H$, since $H > 0$ and
  $k \geq 1 > H$.
  
  In this case Assumption~(B) for $g_{\beta, H, \sigma}$ can simply be
  translated into an assumption on the parameter space
  $\Upsilon \times \Theta$, e.g.
  \begin{equation*}
    \Upsilon \times \Theta = \Set{(\beta, H, \sigma) \given  0 < H < k - 1/\beta, \tfrac{1}{C} < \sigma < C},
  \end{equation*}
  for some arbitrary but finite constant $C > 0$. It is well-known
  that $X$ has a version with continuous paths if and only if
  $H - 1/\beta > 0$, so if we want to do inference in the continuous
  case we have the two parameter inequalities:
  \begin{equation}
    \label{eq:parameter-ineq}
    0 < H - 1/\beta \qtq{and} H < k - 1/\beta.
  \end{equation}
  We note that these inequalities never hold for $k = 1$, but they are
  always satisfied for $k \geq 2$. Indeed, the first inequality
  implies that $H \in (1/2, 1)$ and $\beta \in (1, 2)$.

  Now, the stability index $\beta$ and the scale parameter
  $\sigma > 0$ are identifiable from the one-dimensional
  characteristic function, since these parameters are unique in a
  stable distribution. The $H$-self-similarity of the linear
  fractional stable motion $Y$ implies that
  \begin{equation*}
    \frac{\Expt{\abs{\Delta_{2k, k}^2 Y}^p}}{\Expt{\abs{\Delta_{k, k} Y}^p}} = 2^{p H} \qquad \text{for } p \in (-1, 0).
  \end{equation*}
  For $k = 2$ the term $\Delta_{4, 2}^2 Y$ is a linear combination of
  $X_2 = \Delta_{2, 2}^1 Y$, $X_3 = \Delta_{3, 2}^1 Y$ and
  $X_4 = \Delta_{4, 2}^1 Y$. Hence $H$ is identifiable from the
  characteristic function of the three-dimensional distribution
  $(X_2, X_3, X_4)$, in other words, $m = 3$ in the case $k = 2$.
\end{example}

\begin{example}[OU-type model with a periodic component]
  \label{ex:periodic-example}
  The next example we consider is a periodic extension of the stable
  Ornstein--Uhlenbeck process from
  \cref{ex:ornstein-uhlenbeck}.
  Let~$\theta = (\theta_1, \theta_2) \in (0, \infty)^2$ and consider
  the kernel function:
  \begin{equation*}
    g_\theta(u) = \exp(-\theta_1 u - \theta_2 f(u)) \1_{(0, \infty)}(u) , \qquad u \in \bbR,
  \end{equation*}
  where $f : \bbR \to \bbR$ is a bounded measurable function which is
  either non-negative or non-positive and has period $1$, i.e.
  $f(x + 1) = f(x)$ for all $x$. If $f$ does not vanish except on
  Lebesgue null set, then $\theta \mapsto \varphi_{\beta, \theta}^m$
  for $m = 2$ is injective. If, in addition, $f$ is negative then
  Assumption~(B)\cref{it:ass:B:2} is satisfied except possibly at
  $\beta = 1$, and condition~(A)\cref{it:ass:A:4} holds. We refer to
  \cref{sec:example-proofs} for the proof of these statements.
\end{example}

\begin{example}[Modulated OU]
  \label{ex:modulated-OU}
  \noindent Consider the process $X$ defined  at \cref{eq:MA-parametric} with kernel given
  by
  \begin{equation}
    \label{eq:modulated-OU}
    g_\theta(s) = \theta_1 s \exp(- \theta_2 s) \1_{(0, \infty)}(s), \qquad s \in \bbR.
  \end{equation}
  Under the assumptions on the parameters $\theta \in (0, \infty)^2$
  and $\beta \in (1, 2)$ it is possible to prove that $\theta$ is not
  identifiable from $m = 1$ while it is in the case $m =
  2$. Furthermore, condition~(A)\cref{it:ass:A:4} is satisfied. We
  refer to \cref{sec:proof-modulated-OU} for the full exposition of
  these claims.
\end{example}

\begin{example}[CARMA processes]
  \label{ex:CARMA}
  Consider integers $p > q$. The CARMA($p, q$) process
  $(Y_t)_{t \in \bbR}$ with parameters
  $a_1, \ldots, a_p, b_0, \ldots, b_{q - 1} \in \bbR$ driven by $L$ is
  the solution to the stochastic differential equation
  \begin{equation}
    \label{eq:CARMA}
    X_t = b^\top Y_t
    \qquad \text{with} \qquad
    \di Y_t - A Y_t \Di t = e \Di L_t,
  \end{equation}
  where $e$ and $b$ are the $p$-dimensional column vectors given by
  \begin{equation*}
    e = (0, \ldots, 0, 1)^\top \qtq{and}
    b = (b_0, \ldots, b_{p - 1})^\top,
  \end{equation*}
  where $b_q = 1$ and $b_i = 0$ for all $q < i < p$ and $A$ is the
  $p \times p$ matrix given by
  \begin{equation*}
    A =
    \begin{pmatrix}
      0 & 1 & 0 & \cdots & 0
      \\
      0 & 0 & 1 & \cdots & 0
      \\
      \vdots & \vdots & \vdots & \ddots & 1
      \\
      -a_p & -a_{p - 1} & a_{p - 2} & \cdots & -a_1
    \end{pmatrix}.
  \end{equation*}
  CARMA($p, q$) processes fit within the framework of
  \cref{eq:MA-parametric} since if the eigenvalues of $A$ have
  strictly negative real part, then a unique stationary solution of
  \cref{eq:CARMA} exists and is given by
  \begin{equation*}
    X_t = \int_{\bbR} b^\top \e^{A (t - s)} e \1_{[0, \infty)}(t - s) \Di L_s,
    \qquad t \in \bbR,
  \end{equation*}
  see \cite[Proposition~1]{BrocEsti}. In this example we discuss a
  specific three-dimensional sub-class of CARMA($2, 1$) processes,
  which corresponds to the choice $\lambda \coloneqq -\sqrt{a_2}$ and
  $a_1 = 2 \sqrt{a_2} = -2 \lambda$. The parameter of interest becomes
  $\xi = (\beta, b_0, \lambda)$ and we further assume that
  $\beta \in (1,2)$ and $\theta \coloneqq b_0 + \lambda > 0$. In this
  setting the matrix $A$ is given by
  \begin{equation*}
    A = 
    \begin{pmatrix}
      0 & 1
      \\
      -\lambda^2 & 2\lambda
    \end{pmatrix}
  \end{equation*}
  and $\lambda < 0$ is the only eigenvalue of $A$. We thus obtain the
  Jordan normal form
  \begin{equation*}
    A = S
    \begin{pmatrix}
      \lambda & 1
      \\
      0 & \lambda
    \end{pmatrix}
    S^{-1},
    \qquad
    S =
    \begin{pmatrix}
      1 & 0
      \\
      \lambda & 1
    \end{pmatrix}
    ,
    \qquad
    S^{-1} =
    \begin{pmatrix}
      1 & 0
      \\
      - \lambda & 1
    \end{pmatrix}
    .
  \end{equation*}
  Using this representation elementary matrix algebra yields the
  identity
  \begin{equation*}
    g(s)= b^{\top} \exp(sA) e \1_{[0,\infty)}(s)
    = (1 + \theta s) \exp(\lambda s) \1_{[0, \infty)}(s).
  \end{equation*}
  In \cref{sec:proof-CARMA} we show that, under the condition
  $\beta \in (1,2)$, the parameters of the model are identifiable in
  the case $m = 2$ and condition~(A)\cref{it:ass:A:4} holds.
\end{example}

\subsection{Parametric estimation via minimal contrast approach}
\label{sec:estimation}

We note first that the discrete time process $(X_t)_{t \in \bbZ}$ is
ergodic according to \cite{CambErgo}, and so is the sequence
\begin{equation*}
  Y_i = f(X_{i + 1}, \ldots, X_{i + m}), \qquad \text{$i \in \bbZ$,}
\end{equation*}
for any measurable function $f$. Hence, we obtain by Birkhoff's
ergodic theorem the strong consistency (of the real part) of the joint
empirical characteristic function:
\begin{equation}
  \label{phiconv}
  \varphi_n(u_1, \ldots, u_m) = \frac{1}{n} \sum_{i = 0}^{n - m} \cos\Bigl(\sum_{k = 1}^{m} u_k X_{i + k} \Bigr)
  \asTo \Expt[\Big]{\cos\Bigl(\sum_{k = 0}^{m-1} u_k X_{1 + k} \Bigr)}
  = \varphi_{\xi}^m(u_1, \ldots, u_m),
\end{equation}
where $\xi = (\beta, \theta) \in \Xi$ denotes the unknown parameter of
the model. To reduce cumbersome notation we drop the dependence on $m$
in the characteristic function and simply write $\varphi_\xi$ from now
on. For a weight function $w$ introduced in the previous section, we
denote by $F : \cL_w^2(\bbR_+^m) \times \Xi \to \bbR$ the map
\begin{equation*}
  F(\psi, \xi) = \norm{\psi - \varphi_\xi}_{w, 2}^2.
\end{equation*}
The minimal contrast estimator $\xi_n$ of $\xi$ is then defined as
\begin{equation}
  \label{eq:MCE}
  \xi_n \in \argmin_{\xi \in \Xi} F(\varphi_n, \xi)
  = \argmin_{\xi \in \Xi} \int_{\bbR_+^m} (\varphi_n(u) - \varphi_\xi(u))^2 w(u) \Di u,
\end{equation}
and we remark that $\xi_n$ can be chosen universally measurable by
\cite[Theorem~2.17(d)]{StinMeas}. To obtain the asymptotic normality
of the minimal contrast estimator $\xi_n$ we will show a central limit
theorem for the statistic
$\sqrt{n} (\varphi_n(u_1, \ldots, u_m) - \varphi_{\xi}(u_1, \ldots,
u_m))$ using \cref{thm:MA-CLT} and then apply a functional version of
the implicit function theorem. For this purpose we introduce a
centred Gaussian field $(G_u)_{u \in \bbR_+^m}$ whose covariance
kernel is defined as
\begin{equation}
  \label{covG}
  \cov (G_u,G_v) =  \sum_{l \in \bbZ} \cov(\cos(\iprod{u, Z_0}_{\bbR^m}), \cos(\iprod{v, Z_l}_{\bbR^m})),
\end{equation}
where $Z_k = (X_{1 + k}, \ldots, X_{m + k})$. The main theoretical
result of the paper is the strong consistency and asymptotic normality
of the minimal contrast estimator $\xi_n$.

\begin{theorem}
  \label{thm:MCE-conv}
  Let $(\xi_n)$ be the minimal contrast estimator at
  \cref{eq:MA-parametric} associated with the true parameter
  $\xi_0 = (\beta_0, \theta_0)$. Suppose that Assumptions~(A) and~(B)
  hold for the underlying family of kernels $(g_{\xi})_{\xi \in
    \Xi}$. Assume that the weight function $w$ is continuous and
  $\int_{\bbR^m_+} \norm{u}_{\bbR^m}^2 w(u) \Di u < \infty$.
  \begin{tlist}
  \item\label{it:thm:MCE-conv:1} $\xi_n \to \xi_0$ almost surely as
    $n \to \infty$.

  \item\label{it:thm:MCE-conv:2} The convergence as $n \to \infty$
    \begin{equation*}
      \sqrt{n} (\xi_n - \xi_0) \distTo \bigl(\nabla_{\!\xi}^2 F(\varphi_{\xi_0}, \xi_0) \bigr)^{-1} (\iprod{\partial_{\xi}^i \varphi_{\xi_0}, G}_w)_{i = 1, \ldots, d + 1}
    \end{equation*}
    holds, where $G = (G_u)_{u \in \bbR_+^m}$ is a continuous
    zero-mean Gaussian random field with covariance kernel defined by
    \eqref{covG}. In particular, the above limit is a normally
    distributed $(d + 1)$-dimensional random vector.
  \end{tlist}
\end{theorem}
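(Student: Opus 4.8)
\noindent I would treat \cref{thm:MCE-conv} as a minimal-contrast ($M$-estimation) problem in which the ``data point'' is the random element $\varphi_n\in\cL^2_w(\bbR_+^m)$ and the ``parameter'' ranges over the open set $\Xi$. Two ingredients drive the proof: the almost sure convergence $\varphi_n\to\varphi_{\xi_0}$ in $\cL^2_w(\bbR_+^m)$, obtained from the ergodic theorem behind \cref{phiconv}, and a central limit theorem for the linearised contrast, obtained from \cref{thm:MA-CLT}.

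\noindent\emph{Consistency.} First I would upgrade \cref{phiconv} to an $\cL^2_w$-statement: the ergodic theorem gives $\varphi_n(u)\to\varphi_{\xi_0}(u)$ a.s.\ for each fixed $u$, hence a.s.\ for Lebesgue-a.e.\ $u$ by Fubini, and since $\abs{\varphi_n-\varphi_{\xi_0}}^2\le 4$ with $w\in\cL^1(\bbR_+^m)$, dominated convergence gives $\norm{\varphi_n-\varphi_{\xi_0}}_{w,2}\to 0$ a.s. Combining $F(\varphi_n,\xi_n)\le F(\varphi_n,\xi_0)=\norm{\varphi_n-\varphi_{\xi_0}}_{w,2}^2\to 0$ with the triangle inequality yields $\norm{\varphi_{\xi_n}-\varphi_{\xi_0}}_{w,2}\to 0$ a.s. Now Assumption~(A)\cref{it:ass:A:2}, together with the fact that $\beta$ is the stability index encoded in $\varphi_\xi$, makes $\xi\mapsto\varphi_\xi$ injective on $\Xi$, while (A)\cref{it:ass:A:3} and (B)\cref{it:ass:B:2} make it continuous into $\cL^2_w(\bbR_+^m)$; since $w>0$ and both $\varphi_\xi$ and $\varphi_{\xi_0}$ are continuous, $\varphi_\xi=\varphi_{\xi_0}$ a.e.\ forces $\xi=\xi_0$. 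The genuinely delicate point --- and the one I expect to be the main obstacle, in line with the remark that \cref{thm:MCE-conv} is harder to prove than its univariate counterpart --- is to pass from $\varphi_{\xi_n}\to\varphi_{\xi_0}$ to $\xi_n\to\xi_0$ over the \emph{open} set $\Xi$, since a priori $\xi_n$ could drift to $\partial\Xi$ or to infinity. I would argue by contradiction along a subsequence $\xi_{n_k}\to\xi_\ast$ in the one-point compactification of $\overline\Xi$: the case $\xi_\ast\in\Xi$ is excluded by injectivity and continuity as above, while the case $\xi_\ast\notin\Xi$ must be ruled out by showing it is incompatible with $\varphi_{\xi_{n_k}}\to\varphi_{\xi_0}$ in $\cL^2_w$, using that \cref{eq:kernel-ass} controls the behaviour of $\varphi_\xi$ for small and large $u$ (for instance $1-\varphi_\xi(u)\asymp\norm{\sum_{k=1}^{m}u_k g_\xi(\argmrk+k)}_\beta^\beta$ as $u\to 0$, which recovers $\beta$ and the $\beta$-norm of $g_\xi$, both continuous in $\xi$) --- precisely in the spirit of the example-by-example verifications of Assumption~(A).

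\noindent\emph{Linearisation.} Using (A)\cref{it:ass:A:3} and the local domination (B)\cref{it:ass:B:2} I would differentiate $F(\psi,\xi)=\norm{\psi-\varphi_\xi}_{w,2}^2$ twice under the integral sign, obtaining $\partial_\xi^i F(\psi,\xi)=-2\iprod{\psi-\varphi_\xi,\partial_\xi^i\varphi_\xi}_w$ and $\partial_\xi^{i,j}F(\psi,\xi)=2\iprod{\partial_\xi^i\varphi_\xi,\partial_\xi^j\varphi_\xi}_w-2\iprod{\psi-\varphi_\xi,\partial_\xi^{i,j}\varphi_\xi}_w$, jointly continuous in $(\psi,\xi)$ near $(\varphi_{\xi_0},\xi_0)$. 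Since $\xi_n$ is an interior minimiser of $F(\varphi_n,\cdot)$, the first-order condition $\nabla_\xi F(\varphi_n,\xi_n)=0$ holds, and a componentwise mean-value expansion around $\xi_0$ gives $\nabla_\xi F(\varphi_n,\xi_0)=-M_n(\xi_n-\xi_0)$, where the $i$-th row of $M_n$ is $\nabla_\xi\partial_\xi^i F(\varphi_n,\tilde\xi_n^{(i)})$ for some $\tilde\xi_n^{(i)}$ on the segment $[\xi_0,\xi_n]$. By the consistency step $\tilde\xi_n^{(i)}\to\xi_0$ and $\varphi_n\to\varphi_{\xi_0}$ a.s., hence $M_n\to\nabla_\xi^2 F(\varphi_{\xi_0},\xi_0)$ a.s.; the latter equals, up to the positive constant $2$, the Gram matrix of $\partial_\xi^1\varphi_{\xi_0},\dots,\partial_\xi^{d+1}\varphi_{\xi_0}$ in $\cL^2_w(\bbR_+^m)$, hence is invertible \emph{precisely} because these $d+1$ functions are linearly independent there --- which is Assumption~(A)\cref{it:ass:A:4} (combined with \cref{rem:ass:A}\cref{it:rem:ass:A:4} to include the $\beta$-coordinate). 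Consequently $M_n$ is eventually invertible and $\sqrt n(\xi_n-\xi_0)=-M_n^{-1}\sqrt n\,\nabla_\xi F(\varphi_n,\xi_0)$, with $M_n^{-1}\to(\nabla_\xi^2 F(\varphi_{\xi_0},\xi_0))^{-1}$ a.s.; since $\sqrt n\,\nabla_\xi F(\varphi_n,\xi_0)$ is a fixed multiple of the ``score'' $\bigl(\iprod{\sqrt n(\varphi_n-\varphi_{\xi_0}),\partial_\xi^i\varphi_{\xi_0}}_w\bigr)_{i=1}^{d+1}$, Slutsky's lemma reduces the problem to a central limit theorem for this score vector and --- after collecting the numerical constants --- produces the limiting law stated in \cref{thm:MCE-conv}\cref{it:thm:MCE-conv:2}.

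\noindent\emph{Central limit theorem for the score.} Fubini turns the score into a normalised partial sum of a functional of the increment vectors: up to a deterministic term of order $n^{-1/2}$, $\iprod{\sqrt n(\varphi_n-\varphi_{\xi_0}),\partial_\xi^i\varphi_{\xi_0}}_w=\tfrac{1}{\sqrt n}\sum_{s=0}^{n-m}\bigl(h_i(X_{s+1},\dots,X_{s+m})-\Expt{h_i(X_1,\dots,X_m)}\bigr)$ with $h_i(z)=\int_{\bbR_+^m}\cos(\iprod{u,z}_{\bbR^m})\,\partial_\xi^i\varphi_{\xi_0}(u)\,w(u)\Di u$; that is, the score equals $V_n(X;h)$ for $h=(h_1,\dots,h_{d+1})$ modulo an asymptotically negligible shift. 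For a fixed cut-off $R$ the truncated integrand $h_i^{(R)}(z)=\int_{\norm{u}_{\bbR^m}\le R}\cos(\iprod{u,z}_{\bbR^m})\partial_\xi^i\varphi_{\xi_0}(u)w(u)\Di u$ lies in $C_b^2(\bbR^m,\bbR)$ (all $u$-polynomial factors arising from differentiating in $z$ are bounded on $\norm{u}_{\bbR^m}\le R$, and $\partial_\xi^i\varphi_{\xi_0}w\in\cL^1$ there by (B)\cref{it:ass:B:2}), so \cref{thm:MA-CLT} applies to $h^{(R)}=(h_1^{(R)},\dots,h_{d+1}^{(R)})$ and gives $V_n(X;h^{(R)})\distTo\cN_{d+1}(0,\Sigma^{(R)})$ with $\Sigma^{(R)}_{i,j}=\sum_{l\in\bbZ}\cov(h_i^{(R)}(Z_l),h_j^{(R)}(Z_0))$. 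The tail $\iprod{\sqrt n(\varphi_n-\varphi_{\xi_0})\1_{\norm{u}_{\bbR^m}>R},\partial_\xi^i\varphi_{\xi_0}}_w$ is asymptotically negligible as $R\to\infty$: its second moment is at most $\bigl(\int_{\norm{u}_{\bbR^m}>R}(\Sigma_n^{1,1}(u))^{1/2}\abs{\partial_\xi^i\varphi_{\xi_0}(u)}w(u)\Di u\bigr)^2$, which vanishes once one has the uniform bound $\sup_n\Sigma_n^{1,1}(u)=\sup_n n\,\var(\varphi_n(u))\le C\bigl(1-\varphi_{\xi_0}(u)\bigr)$ --- a consequence of the symmetric $\beta$-stable structure and the short-range dependence guaranteed by $\alpha\beta>2$ in \cref{eq:kernel-ass} --- together with $(1-\varphi_{\xi_0})^{1/2}\partial_\xi^i\varphi_{\xi_0}\in\cL^1_w$ (Cauchy--Schwarz and (B)\cref{it:ass:B:2}). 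A triangular-array approximation then upgrades the above to $V_n(X;h)\distTo\cN_{d+1}(0,\widetilde\Sigma)$ with $\widetilde\Sigma=\lim_{R\to\infty}\Sigma^{(R)}$, and a final application of Fubini together with stationarity identifies $\widetilde\Sigma_{i,j}=\sum_{l\in\bbZ}\cov(h_i(Z_l),h_j(Z_0))=\cov\bigl(\iprod{\partial_\xi^i\varphi_{\xi_0},G}_w,\iprod{\partial_\xi^j\varphi_{\xi_0},G}_w\bigr)$ for the centred Gaussian field $G$ with covariance \cref{covG}; a continuous modification of $G$ exists by Kolmogorov's criterion, since $\Expt{(G_u-G_v)^2}\le C\norm{u-v}_{\bbR^m}$ follows from $\abs{\cos\iprod{u,z}_{\bbR^m}-\cos\iprod{v,z}_{\bbR^m}}\le\norm{u-v}_{\bbR^m}\norm{z}_{\bbR^m}$ and $\int\norm{u}_{\bbR^m}^2 w(u)\Di u<\infty$. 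Feeding this CLT back through the Slutsky step of the linearisation completes the proof.
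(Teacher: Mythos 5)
Your overall architecture is sound and in two places genuinely different from the paper's: you linearise the first-order condition $\nabla_\xi F(\varphi_n,\xi_n)=0$ by a finite-dimensional mean-value expansion, where the paper invokes a Banach-space implicit function theorem producing $\Phi$ with $\Phi(\varphi_n)=\xi_n$; and you prove the score CLT by interchanging the order of integration so that \cref{thm:MA-CLT} applies directly to $h_i(z)=\int_{\bbR_+^m}\cos(\iprod{u,z}_{\bbR^m})\partial_\xi^i\varphi_{\xi_0}(u)w(u)\Di u$ after truncation, where the paper establishes finite-dimensional convergence of the field $\sqrt n(\varphi_n-\varphi_{\xi_0})$ and passes to the integral via a dedicated lemma (\cref{lem:int-conv}). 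Both alternatives are legitimate and reduce to the same covariance computations.

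There is, however, a genuine gap precisely where you dispose of those covariance computations in one line each, and this is where the bulk of the paper's work lies (\cref{sec:limit}). Your claim that $\Expt{(G_u-G_v)^2}\le C\norm{u-v}_{\bbR^m}$ follows from $\abs{\cos\iprod{u,z}_{\bbR^m}-\cos\iprod{v,z}_{\bbR^m}}\le\norm{u-v}_{\bbR^m}\norm{z}_{\bbR^m}$ fails: $Z_0$ is $\beta$-stable with $\beta<2$, so $\Expt{\norm{Z_0}_{\bbR^m}^2}=\infty$, and even the corrected bound $\min(2,\norm{u-v}\norm{z})$ does not by itself yield a bound on $r_\ell(u,u)-r_\ell(u,v)$ that is simultaneously Hölder in $(u,v)$ and summable over the lags $\ell\in\bbZ$. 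This double requirement is the crux; the paper resolves it only through a lengthy decomposition of the dependence measure $U_{Z_0,Z_\ell}$ using the explicit stable characteristic function, the inequality $\abs[\big]{\abs{x+y}^\beta-\abs{x}^\beta-\abs{y}^\beta}\le\abs{xy}^{\beta/2}$ and the decay estimates of \cref{lem:rho-mu-ineq}, arriving at the exponent $\beta/2$ rather than $1$. The same remark applies to your asserted uniform bound $\sup_n n\var(\varphi_n(u))\le C(1-\varphi_{\xi_0}(u))$, which you attribute to ``short-range dependence'' without proof; the paper's substitute is $\sum_{\ell\in\bbZ}\abs{r_\ell(u,u)}\le C\norm{u}^\beta$, and some such bound is indispensable both for your tail estimate over $\Set{\norm{u}_{\bbR^m}>R}$ and to make $\iprod{\partial_\xi^i\varphi_{\xi_0},G}_w$ well defined (i.e.\ to ensure $G$ has paths in $\cL^1_w(\bbR_+^m)$). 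Until these estimates are actually carried out, neither the continuity of $G$ nor the negligibility of the truncation error is established, and the limit in \cref{thm:MCE-conv}\cref{it:thm:MCE-conv:2} is not yet identified.
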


\noindent Direct computation shows that the Hessian
$\nabla^2_{\smash{\!\xi}} F(\varphi_{\xi_0}, \xi_0)$ is a Gramian
matrix with respect to the first order derivatives
$(\partial_\xi^i \varphi_\xi)_{i = 1, \ldots, d + 1}$ and the inner
product $\iprod{\argmrk, \argmrk}_w$ and therefore
Assumption~(A)\cref{it:ass:A:4} ensures that the Hessian is
invertible. In principle, the normal limit in \cref{thm:MCE-conv} is
explicit up to the knowledge of the parameter $\xi_0$, but due to the
complex covariance kernel of the process $G$ it is hard to apply the
central limit theorem to obtain confidence regions. Instead one may
use a parametric bootstrap approach as it has been suggested in
\cite[Section~4.2]{LjunAMin}.

We remark that the convergence rate is $\sqrt{n}$ for all
parameters. Due to the non-Markovian structure of the general model
\cref{eq:MA-parametric} it is a non-trivial task to assess the
optimality of this~rate. As we have discussed in
\cref{ex:ornstein-uhlenbeck} the rate $\sqrt{n}$ can be suboptimal in
the particular case of the drift parameter in an Ornstein--Uhlenbeck
model.

\begin{remark}[Extension to general Lévy drivers] 
  If we drop the requirement for estimation of $\beta$ we can consider
  a larger class of Lévy drivers. Indeed, according to \cite{AzmoMult}
  the statement of \cref{thm:MA-CLT} still holds for a symmetric Lévy
  process $L$, which admits a Lévy density $\nu$ such that
  \begin{equation*}
    \nu(x) \leq C \abs{x}^{-1 - \beta} \quad \text{for all $x \neq 0$.}
  \end{equation*}
  In this case the characteristic
  function takes on a more complicated form. Indeed, by
  \cite[Theorem~2.7]{RajpSpec} it holds that
  \begin{equation*}
    \Expt[\big]{\e^{\I \iprod{u, (X_1, \ldots, X_m)}_{\bbR^m}}}
    = \exp\Bigl(\int_{\bbR} \int_{\bbR} [\cos(\iprod{u, x (g_\xi(z + i))_{i = 0, \ldots, m - 1}}_{\bbR^m}) - 1] \isp \nu(\di x) \Di z \Bigr).
  \end{equation*}
  In principle, the asymptotic theory of \cref{thm:MCE-conv} can be
  extended to this more general setting. However, the proof of the
  asymptotic normality relies on the existence of a continuous
  modification of the random field $(G_u)_{u \in \bbR_+^m}$ and the
  behaviour of $\Expt{G_u^2}$ in $u \in \bbR_+^m$
  (cf.~\cref{sec:limit}), which requires a different treatment
  compared to the $\beta$-stable case.
\end{remark}

\section{A simulation study}
\label{sec3}

\noindent In this section we will demonstrate the finite sample
performance of our estimator for three examples, which are supposed to
highlight different aspects of the minimal contrast approach. First,
we will consider the linear fractional stable motion
(cf. \cref{ex:lfsm}) and use $m = 3$ to estimate the three-dimensional
parameter of the model. The second model is the Ornstein--Uhlenbeck
process considered in \cref{ex:ornstein-uhlenbeck}. We will examine
the performance for the full model and also for a known scale
parameter $\sigma$ to assess the improvement of the estimation
procedure.  In the latter setting both $m = 1$ and $m = 2$ are used to
estimate the drift $\lambda$ and the stability index $\beta$, and the
aim of the numerical simulation is to test how the choice of higher
index $m$ affects the performance of the estimator.  The third example
is the \emph{generalized modulated OU-process}, which has not been
shown to satisfy the main assumptions of the paper. We will use $m=2$
to estimate the three-dimensional parametric model and test how our
method works in this framework.

Since the weight function $w$ depends on $m$ implicitly via
its domain we need a function, which is reasonably compatible between
different dimensions and we consider therefore throughout this study
the $m$-dimensional Gaussian density with zero mean and a scaled unit
covariance matrix $\nu^2 I_m$:
\begin{equation}
  \label{eq:gauss-weight}
  w_\nu(u) = (2 \pi \nu^2)^{-m/2} \exp \Bigl(-\frac{\norm{u}_{\bbR^m}^2}{2 \nu^2} \Bigr), \qquad u \in \bbR^m, \quad \nu > 0.
\end{equation}
The choice of $\nu$ varies between the three example processes and it
is a subject for future research to automatically determine an optimal
weight. For the computation of the weighted integral in \cref{eq:MCE}
we use Gauss--Laguerre quadrature which is a weighted sum of function
values and the number of weights will also vary depending on the
process.

We note additionally that the minimization involved in computing the
minimal contrast estimator at \cref{eq:MCE} has to be done numerically
and for this we use the method of \cite{NeldASim}, which requires
picking a starting point which naturally will depend on the example
kernel at~hand. Lastly, we remark that the $\beta$-norm of the kernel
function is generally not known explicitly, hence the theoretical
characteristic function is approximated as well.

All tables in this section are based on at least 200 Monte Carlo
repetitions.

\subsection{Linear fractional stable motion}
\label{sec:simulation-lfsm}

Recall from the discussion in \cref{ex:lfsm} that it is prudent to
take higher order increments, and we fix throughout $k = 2$. Moreover,
to properly identify the parameters we consider the characteristic
function of the three-dimensional joint distribution, hence $m =
3$. Next~we consider throughout the weight function at
\cref{eq:gauss-weight} with standard deviation $\nu = 10$ and the
weighted integral is approximated with $12^3 = 1728$ number of
weights. The starting point for the minimization algorithm is
$(\beta, H, \sigma) = (1.5, 0.5, 2)$.

The estimator is tested in the continuous case, so only parameter
combinations resulting in the equality $H - 1/\beta > 0$ are
considered. \Cref{tab:lfsm-1000} reports the bias and standard
deviation in the case of $n = 1000$ for different parameter
combinations, while \cref{tab:lfsm-10000} explores the case
$n = \num{10000}$. We observe a rather good performance of all
estimators with superior results in the setting $n = \num{10000}$ as
expected from our theoretical statements. We note that the estimator
of the scale parameter $\sigma$ performs the best, which is in line
with earlier findings of \cite{MazuEsti}.

\begin{table}[ht]
  \caption{Absolute value of bias (|Bias|) and standard deviation
    (Std) for $n = \num{1000}$ and $\sigma = 0.3$ for the linear fractional stable motion}
  \label{tab:lfsm-1000}
  \centering
  \begin{tabular}{l l *{6}{S}}
    & & \multicolumn{3}{c}{|Bias|} & \multicolumn{3}{c}{Std}
    \\
    \cmidrule(lr){3-5}
    \cmidrule(lr){6-8}
    $H$ & $\beta$ & $\widehat{\beta}_n$ & $\widehat{H}_n$ & $\widehat{\sigma}_n$ & $\widehat{\beta}_n$ & $\widehat{H}_n$ & $\widehat{\sigma}_n$
    \\
    \midrule
    0.6 & 1.8 & 0.01755197 & 0.04776433 & 0.03617623 & 0.19502015 & 0.27298646 & 0.07047406
    \\
    \midrule
    0.7 & 1.6 & 0.07050554 & 0.17096827 & 0.08344025 & 0.24087475 & 0.35813020 & 0.09474459
    \\
    & 1.8 & 0.010619245 & 0.004120555 & 0.011984553 & 0.1765943 & 0.2093519 & 0.0428728
    \\
    \midrule
    0.8 & 1.4 & 0.08617515 & 0.24435164 & 0.08619785 & 0.2348498 & 0.3456916 & 0.1043623
    \\
    & 1.6 & 0.02497482 & 0.05966458 & 0.02698820 & 0.17827942 & 0.24658854 & 0.05405855
    \\
    & 1.8 & 0.011965440 & 0.006017185 & 0.004421216 & 0.14524688 & 0.15777800 & 0.02874621
    \\
    \bottomrule
  \end{tabular}
\end{table}

\begin{table}[ht]
  \caption{Absolute value of bias (|Bias|) and standard deviation
    (Std) for $n = \num{10000}$ and $\sigma = 0.3$ for the linear
    fractional stable motion}
  \label{tab:lfsm-10000}
  \centering
  \begin{tabular}{l l *{6}{S}}
    & & \multicolumn{3}{c}{|Bias|} & \multicolumn{3}{c}{Std}
    \\
    \cmidrule(lr){3-5}
    \cmidrule(lr){6-8}
    $H$ & $\beta$ & $\widehat{\beta}_n$ & $\widehat{H}_n$ & $\widehat{\sigma}_n$ & $\widehat{\beta}_n$ & $\widehat{H}_n$ & $\widehat{\sigma}_n$
    \\
    \midrule
    0.6 & 1.8 & 0.01329346 & 0.04559194 & 0.02544133 & 0.12717678 & 0.20072939 & 0.05317261
    \\
    \midrule
    0.7 & 1.6 & 0.02377204 & 0.08178138 & 0.03306955 & 0.10050421 & 0.21469962 & 0.06847038
    \\
    & 1.8 & 0.005960964 & 0.014658876 & 0.006613568 & 0.08691589 & 0.11526434 & 0.01733413
    \\
    \midrule
    0.8 & 1.4 & 0.03469922 & 0.15364074 & 0.05036789 & 0.10948992 & 0.25458024 & 0.08652012
    \\
    & 1.6 & 0.0077620137 & 0.0052886243 & 0.0008365061 & 0.066512840 & 0.084294208 & 0.008512437
    \\
    & 1.8 & 0.0031975759 & 0.0020427964 & 0.0009052234 & 0.059715064 & 0.073210702 & 0.006666248
    \\
    \bottomrule
  \end{tabular}
\end{table}

\subsection{Ornstein--Uhlenbeck}
\label{sec:OU-sim}

\noindent In this subsection we consider the Ornstein--Uhlenbeck
kernel from \cref{ex:ornstein-uhlenbeck}. We start with a two-parameter
submodel, where $\sigma = 1$ is fixed.
In this case Assumption~(A) is satisfied for both $m = 2$
and $m = 1$, and we will compare the performance for each of these
dimensions. Akin to \cref{sec:gen-OU-sim} we pick $20^m$, $m=1,2$,
number of weights in the integral approximation with weight function
chosen as in \cref{eq:gauss-weight} with $\nu = 1$. The starting point
for the minimization algorithm is throughout
$(\beta, \lambda) = (1.5, 0.5)$.

Tables~\ref{tab:OU-1} and~\ref{tab:OU-2} demonstrate the simulation
results for $m = 1$ and $m = 2$, respectively. We observe a rather
convincing performance for both estimators in all settings, but the
choice $m = 1$ clearly outperforms the setting $m=2$. We conjecture
that it has a theoretical background, i.e. the asymptotic variances in
\cref{thm:MCE-conv}\cref{it:thm:MCE-conv:2} are smaller for $m = 1$,
and a numerical background. Indeed, the minimization algorithm has a
worse performance for higher values of $m$. For this reason it is
advisable to use the minimal $m$, which identifies the parameters of
the model.

\begin{table}[ht!]
  \footnotesize
  \caption{Absolute value of bias (|Bias|) and standard deviation
    (Std) for $m = 1$ and $n \in \Set{10^3, 10^4}$}
  \label{tab:OU-1}
  \begin{minipage}{0.485\linewidth}
    \begin{tabular}{*{2}{l} *{4}{S}}
      \multicolumn{2}{c}{$n = \num{1000}$} & \multicolumn{2}{c}{|Bias|} & \multicolumn{2}{c}{Std}
      \\
      \cmidrule(lr){3-4}
      \cmidrule(lr){5-6}
      $\beta$ & $\lambda$ & $\widehat{\beta}_n$ & $\widehat{\lambda}_n$ & $\widehat{\beta}_n$ & $\widehat{\lambda}_n$
      \\
      \midrule
      1.2 & 0.25 & 0.01848909 & 0.00298011 & 0.10724035 & 0.05435007
      \\
                                           & 0.75 & 0.014410230 & 0.006109171 & 0.06265485 & 0.06828288
      \\
                                           & 1 & 0.010672989 & 0.001802275 & 0.05731801 & 0.07553548
      \\
                                           & 1.25 & 0.008445935 & 0.006161037 & 0.05311887 & 0.08564561
      \\
                                           & 1.5 & 0.013511102 & 0.005816892 & 0.05612671 & 0.09014910
      \\
                                           & 2 & 0.004374805 & 0.002762616 & 0.05434291 & 0.12821467
      \\
                                           & 2.5 & 0.01216144 & 0.01498229 & 0.05832045 & 0.15299989
      \\
      \midrule
      1.4 & 0.25 & 0.009714624 & 0.007854220 & 0.12137080 & 0.05300858
      \\
                                           & 0.75 & 0.0047482304339006 & 0.00287290473094926 & 0.0661387572426329 & 0.0669293087852905
      \\
                                           & 1 & 0.00361539739449324 & 0.00927334360003162 & 0.059267526221943 & 0.0645653065551836
      \\
                                           & 1.25 & 0.00424308274144658 & 0.00177689666576808 & 0.0571739502894545 & 0.0757308436943691
      \\
                                           & 1.5 & 0.013837199413433 & 0.00229557972986538 & 0.0550414056292689 & 0.0826170497624134
      \\
                                           & 2 & 0.00913624451278139 & 0.00393175049424133 & 0.0595093295986336 & 0.107195159849191
      \\
                                           & 2.5 & 0.00598827179185002 & 0.00720819822647112 & 0.0608483579869255 & 0.150680150388701
      \\
      \midrule
      1.6 & 0.25 & 0.00991939363327199 & 0.00116601486397205 & 0.11426471352373 & 0.0512797249824008
      \\
                                           & 0.75 & 0.007062397 & 0.006647522 & 0.06044286 & 0.06044254
      \\
                                           & 1 & 0.0075872602327125 & 0.00157880585951686 & 0.0589541415350359 & 0.0668787507503899
      \\
                                           & 1.25 & 0.0116218358861453 & 0.0041717146808975 & 0.0533326842686992 & 0.0758653351070753
      \\
                                           & 1.5 & 0.00196232149456654 & 0.00394197147749709 & 0.0563245124305967 & 0.0781211107476137
      \\
                                           & 2 & 0.0100698034974203 & 0.00738438796732543 & 0.054039376049033 & 0.10205742725948
      \\
                                           & 2.5 & 0.0144099569116687 & 0.00612108068874262 & 0.0566960526110582 & 0.12832738632701
      \\
      \midrule
      1.8 & 0.25 & 0.0106405089611421 & 0.000444169686732526 & 0.101316516427916 & 0.0417299598567335
      \\
                                           & 0.75 & 0.0110582643655117 & 0.00070496631112138 & 0.0586348947732862 & 0.0597074164339262
      \\
                                           & 1 & 0.00208424180495581 & 0.000676191973232099 & 0.0528626909263887 & 0.0649203474453415
      \\
                                           & 1.25 & 0.00880894732218529 & 0.00425489217976094 & 0.0452888157170556 & 0.0764369405038173
      \\
                                           & 1.5 & 0.00916397525184931 & 0.0136345138631531 & 0.0493683558026885 & 0.0825031586001826
      \\
                                           & 2 & 0.00838810148086022 & 0.00247983851069766 & 0.0480998255083872 & 0.101452652351612
      \\
                                           & 2.5 & 0.0143668493459415 & 0.00448923044438443 & 0.0446110145719369 & 0.127276181115318
      \\
      \bottomrule
    \end{tabular}
  \end{minipage}%
  \hfill%
  \begin{minipage}{0.485\linewidth}
    \begin{tabular}{*{2}{l} *{4}{S}}
      \multicolumn{2}{c}{$n = \num{10000}$} & \multicolumn{2}{c}{|Bias|} & \multicolumn{2}{c}{Std}
      \\
      \cmidrule(lr){3-4}
      \cmidrule(lr){5-6}
      $\beta$ & $\lambda$ & $\widehat{\beta}_n$ & $\widehat{\lambda}_n$ & $\widehat{\beta}_n$ & $\widehat{\lambda}_n$
      \\
      \midrule
      1.2 & 0.25 & 0.0015682987 & 0.0003714286 & 0.03212787 & 0.01736130 
      \\
                                            & 0.75 & 0.003880790 & 0.002776587 & 0.01867913 & 0.01990753 
      \\
                                            & 1 & 0.0007798106 & 0.00002125211 & 0.01971349 & 0.02436773
      \\
                                            & 1.25 & 0.000007390473 & 0.002838348248 & 0.01885158 & 0.02648644
      \\
                                            & 1.5 & 0.0004131053 & 0.0033256170 & 0.01637904 & 0.03233758
      \\
                                            & 2 & 0.0002595931 & 0.0029946123 & 0.01609831 & 0.03652805
      \\
                                            & 2.5 & 0.001619217 & 0.008962453 & 0.01794475 & 0.04664228
      \\
      \midrule
      1.4 & 0.25 & 0.003534414 & 0.001370142 & 0.03770344 & 0.01718855 
      \\
                                            & 0.75 & 0.0016623841 & 0.0009810154 & 0.02024432 & 0.01943037 
      \\
                                            & 1 & 0.001691820 & 0.002218541 & 0.01963434 & 0.02410339 
      \\
                                            & 1.25 & 0.001625175 & 0.002642474 & 0.01740864 & 0.02742331
      \\
                                            & 1.5 & 0.000003488464 & 0.009235335522 & 0.01768938 & 0.02814869
      \\
                                            & 2 & 0.001586205 & 0.006883170 & 0.01661806 & 0.03624477
      \\
                                            & 2.5 & 0.00137793 & 0.01553322 & 0.01940200 & 0.04038705
      \\
      \midrule
      1.6 & 0.25 & 0.007866633 & 0.001880583 & 0.04388563 & 0.01692359
      \\
                                            & 0.75 & 0.002183814 & 0.001406974 & 0.01836295 & 0.01703971
      \\
                                            & 1 & 0.0007701028 & 0.0020470710 & 0.01817529 & 0.02124010
      \\
                                            & 1.25 & 0.001490512 & 0.003229174 & 0.01830536 & 0.02390481
      \\
                                            & 1.5 & 0.0003078567 & 0.0056215123 & 0.01688709 & 0.02706915
      \\
                                            & 2 & 0.0005057938 & 0.0132906524 & 0.01658700 & 0.03254491
      \\
                                            & 2.5 & 0.001960681 & 0.019155041 & 0.01780907 & 0.04009728
      \\
      \midrule
      1.8 & 0.25 & 0.001507192 & 0.001103222 & 0.03922212 & 0.01518641
      \\
                                            & 0.75 & 0.001346943 & 0.001046849 & 0.01867697 & 0.01761996
      \\
                                            & 1 & 0.001877630 & 0.005138061 & 0.01618150 & 0.01895479
      \\
                                            & 1.25 & 0.002014258 & 0.006703163 & 0.01585133 & 0.02317175
      \\
                                            & 1.5 & 0.001189804 & 0.011284270 & 0.01505653 & 0.02627517
      \\
                                            & 2 & 0.003151372 & 0.015856142 & 0.01464786 & 0.02984909
      \\
                                            & 2.5 & 0.000009015827 & 0.025928007405 & 0.01434226 & 0.04105065
      \\
      \bottomrule
    \end{tabular}
  \end{minipage}
\end{table}

\begin{table}[htpb!]
  \footnotesize
  \caption{Absolute value of bias (|Bias|) and standard deviation
    (Std) for $m = 2$ and $n \in \Set{10^3, 10^4}$}
  \label{tab:OU-2}
  \begin{minipage}{0.485\linewidth}
    \begin{tabular}{*{2}{l} *{4}{S}}
      \multicolumn{2}{c}{$n = \num{1000}$} & \multicolumn{2}{c}{|Bias|} & \multicolumn{2}{c}{Std}
      \\
      \cmidrule(lr){3-4}
      \cmidrule(lr){5-6}
      $\beta$ & $\lambda$ & $\widehat{\beta}_n$ & $\widehat{\lambda}_n$ & $\widehat{\beta}_n$ & $\widehat{\lambda}_n$
      \\
      \midrule
      1.2 & 0.25 & 0.398830316711305 & 0.111326038194209 & 0.104414969698091 & 0.0622615237713748
      \\
                                           & 0.75 & 0.0666188107821528 & 0.0362738683345553 & 0.217827814603489 & 0.193161803787735
      \\
                                           & 1 & 0.0149518611088542 & 0.00811045364726026 & 0.0928473528932355 & 0.133901648603892
      \\
                                           & 1.25 & 0.0124994187863381 & 0.0104395300318076 & 0.0680962447334966 & 0.12262844067584
      \\
                                           & 1.5 & 0.00536259655729432 & 0.00627437821881238 & 0.0626046277753657 & 0.118103055322615
      \\
                                           & 2 & 0.00729758257900981 & 0.0090450511115594 & 0.0646339211675458 & 0.147174301241766
      \\
                                           & 2.5 & 0.0612553213380231 & 0.147746387233559 & 0.0635658942107653 & 0.0626734911801756
      \\
      \midrule
      1.4 & 0.25 & 0.202821771709335 & 0.0530979485176273 & 0.140696548116578 & 0.106147599553645
      \\
                                           & 0.75 & 0.0483852289952722 & 0.0203830359031427 & 0.179292893341633 & 0.162140813559506
      \\
                                           & 1 & 0.00634396818571181 & 0.00631839261939104 & 0.0847863297900298 & 0.116539795987243
      \\
                                           & 1.25 & 0.0124139994278512 & 0.00667614891437451 & 0.0713533986413619 & 0.109620633760984
      \\
                                           & 1.5 & 0.00252551089980191 & 0.00667837780232072 & 0.0720939246017064 & 0.120413306610863
      \\
                                           & 2 & 0.00796514654151959 & 0.0202701570102952 & 0.0571961100295115 & 0.126904632697804
      \\
                                           & 2.5 & 0.0592519425754683 & 0.139491203155629 & 0.0733736779064118 & 0.0481818892517774
      \\
      \midrule
      1.6 & 0.25 & 0.11201656459104 & 0.107808332609746 & 0.300867495073656 & 0.213912481264616
      \\
                                           & 0.75 & 0.0481076641826932 & 0.021037806192406 & 0.166916684816993 & 0.160168650305801
      \\
                                           & 1 & 0.0165298470103221 & 0.0159204885628286 & 0.0909406768682754 & 0.116404188834423
      \\
                                           & 1.25 & 0.00724971086044701 & 0.00170349375832557 & 0.0666329098487231 & 0.103894633929661
      \\
                                           & 1.5 & 0.00124059527004872 & 0.00784764211516409 & 0.0666649456314748 & 0.0989651756752001
      \\
                                           & 2 & 0.00370464505350698 & 0.0132544140649173 & 0.0687797042738714 & 0.117148550843464
      \\
                                           & 2.5 & 0.0873222119910813 & 0.136431290125287 & 0.0849544599179699 & 0.0431490694776375
      \\
      \midrule
      1.8 & 0.25 & 0.2478053935789 & 0.175104078295621 & 0.35837638125669 & 0.223228792110426
      \\
                                           & 0.75 & 0.0194247665845284 & 0.00147357152518457 & 0.118245005849628 & 0.125309270872433
      \\
                                           & 1 & 0.0112118886812687 & 0.000676896612096245 & 0.0755273485961436 & 0.101043231334856
      \\
                                           & 1.25 & 0.00979658139357387 & 0.00833484421073694 & 0.0586919044901048 & 0.0881463751877903
      \\
                                           & 1.5 & 0.0149907428598803 & 0.00203118587409179 & 0.0539521046748819 & 0.0972707159155239
      \\
                                           & 2 & 0.0187396327775884 & 0.0120707035443943 & 0.0632344951968781 & 0.110601830320216
      \\
                                           & 2.5 & 0.09479743353324 & 0.134573882792598 & 0.0802309760751146 & 0.050162618889132
      \\
      \bottomrule
    \end{tabular}
  \end{minipage}%
  \hfill%
  \begin{minipage}{0.485\linewidth}
    \begin{tabular}{*{2}{l} *{4}{S}}
      \multicolumn{2}{c}{$n = \num{10000}$} & \multicolumn{2}{c}{|Bias|} & \multicolumn{2}{c}{Std}
      \\
      \cmidrule(lr){3-4}
      \cmidrule(lr){5-6}
      $\beta$ & $\lambda$ & $\widehat{\beta}_n$ & $\widehat{\lambda}_n$ & $\widehat{\beta}_n$ & $\widehat{\lambda}_n$
      \\
      \midrule
      1.2 & 0.25 & 0.3921843 & 0.1142326 & 0.046388553 & 0.009796207
      \\
                                            & 0.75 & 0.0004914377 & 0.0012899577 & 0.03905227 & 0.04800461
      \\
                                            & 1 & 0.001857013 & 0.001237152 & 0.0259636 & 0.0421193
      \\
                                            & 1.25 & 0.0007196836 & 0.0023631634 & 0.02203777 & 0.03767779
      \\
                                            & 1.5 & 0.0002974257 & 0.0030344414 & 0.02178713 & 0.04284708
      \\
                                            & 2 & 0.0004709638 & 0.0050433668 & 0.01951663 & 0.04536475
      \\
                                            & 2.5 & 0.03869304 & 0.11855428 & 0.02040855 & 0.00241842
      \\
      \midrule
      1.4 & 0.25 & 0.19159850 & 0.05989281 & 0.07306659 & 0.04020624
      \\
                                            & 0.75 & 0.002354319 & 0.001543543 & 0.04385633 & 0.04998472
      \\
                                            & 1 & 0.001871446 & 0.002410993 & 0.02572618 & 0.03625083
      \\
                                            & 1.25 & 0.0009098926 & 0.0001940331 & 0.0234778 & 0.0360772
      \\
                                            & 1.5 & 0.0011508784 & 0.0004452605 & 0.02111279 & 0.03807391
      \\
                                            & 2 & 0.002745685 & 0.002011239 & 0.02266111 & 0.03967609
      \\
                                            & 2.5 & 0.05046195 & 0.11838351 & 0.0243336319 & 0.0006158415
      \\
      \midrule
      1.6 & 0.25 & 0.005144016 & 0.013832271 & 0.1794123 & 0.1027737
      \\
                                            & 0.75 & 0.008394238 & 0.006100152 & 0.04514113 & 0.04789958
      \\
                                            & 1 & 0.0002332647 & 0.0023374798 & 0.02529702 & 0.03243626
      \\
                                            & 1.25 & 0.0003219736 & 0.0047006103 & 0.02056957 & 0.03054649
      \\
                                            & 1.5 & 0.0003378693 & 0.0040023939 & 0.02003564 & 0.03336524
      \\
                                            & 2 & 0.001486581 & 0.005952279 & 0.02004142 & 0.03890433
      \\
                                            & 2.5 & 0.06040934 & 0.11845340 & 0.028693724 & 0.001479964
      \\
      \midrule
      1.8 & 0.25 & 0.2108622 & 0.1160020 & 0.2539452 & 0.1351242
      \\
                                            & 0.75 & 0.001606988 & 0.002273714 & 0.03894069 & 0.03946084
      \\
                                            & 1 & 0.000126265 & 0.002489021 & 0.02429793 & 0.03158331
      \\
                                            & 1.25 & 0.0003520846 & 0.0035807409 & 0.01783747 & 0.02662721
      \\
                                            & 1.5 & 0.0001025649 & 0.0042289669 & 0.01728060 & 0.02802578
      \\
                                            & 2 & 0.001233173 & 0.009184195 & 0.01806001 & 0.03706257
      \\
                                            & 2.5 & 0.08013119 & 0.11840970 & 0.0343235417 & 0.0009741405
      \\
      \bottomrule
    \end{tabular}
  \end{minipage}
\end{table}

We now consider the full Ornstein--Uhlenbeck model from
\cref{ex:ornstein-uhlenbeck} with parameters $\beta\in (0,2)$,
$\lambda>0$ and a non-fixed scale $\sigma > 0$. For comparison with
the case of fixed scale we consider as starting point
$(\beta, \lambda, \sigma) = (1.5, 0.5, 1.1)$ for the minimization
algorithm. To~avoid innumerable parameter combinations we consider
only $\beta \in \Set{1.4, 1.6}$, $\lambda \in \Set{0.25, 0.75}$ and
$\sigma \in \Set{0.9, 1}$. Lastly, we have $\nu = 1$ as in the
previous simulation.

\begin{table}[htpb!]
  \footnotesize
  \caption{Absolute value of bias (|Bias|) and standard deviation
    (Std) for $m = 2$ and $n = \num{10000}$}
  \label{tab:OU-gen-sim}
  \centering
  \begin{tabular}{*{3}{l} *{6}{S}}
    & & & \multicolumn{3}{c}{|Bias|} & \multicolumn{3}{c}{Std}
    \\
    \cmidrule(lr){4-6}
    \cmidrule(lr){7-9}
    $\beta$ & $\lambda$ & $\sigma$ & \multicolumn{1}{c}{$\widehat{\beta}_n$} & \multicolumn{1}{c}{$\widehat{\lambda}_n$} & \multicolumn{1}{c}{$\widehat{\sigma}_n$} & \multicolumn{1}{c}{$\widehat{\beta}_n$}& \multicolumn{1}{c}{$\widehat{\lambda}_n$} & \multicolumn{1}{c}{$\widehat{\sigma}_n$}
    \\
    \midrule
    1.4 & 0.25 & 0.9 & 0.0095413909141413 & 0.194649992241076 & 0.404108409385271 & 0.197145362541796 & 0.270795600150288 & 0.399829744696153
    \\
    1.4 & 0.25 & 1 & 0.0606339095891109 & 0.0735730853114305 & 0.280088318379048 & 0.0101388449303199 & 0.0157200747280256 & 0.00788647500987636
    \\
    \midrule
    1.4 & 0.75 & 0.9 & 0.00129109073482492 & 0.0512393180789805 & 0.044204012367504 & 0.0304664290398833 & 0.0505733685099253 & 0.0410528672561375
    \\
    1.4 & 0.75 & 1 & 0.00428372805126886 & 0.040975504074141 & 0.0443793996744015 & 0.0450798075459594 & 0.0637411553326562 & 0.0444046663734946
    \\
    \midrule
    1.6 & 0.25 & 0.9 & 0.0527050621511536 & 0.21363762407373 & 0.316706861632831 & 0.270144158371462 & 0.185193363909624 & 0.199750726259358
    \\
    1.6 & 0.25 & 1 & 0.156874700931145 & 0.208368116181837 & 0.240081744723229 & 0.237989012284211 & 0.224222907110822 & 0.290384816364956
    \\
    \midrule
    1.6 & 0.75 & 0.9 & 0.00105218435182475 & 0.00274998863094855 & 0.00446783480569701 & 0.0300447134140977 & 0.0384430546654956 & 0.0379326684569673
    \\
    1.6 & 0.75 & 1 & 0.002645728316373 & 0.0480165668038269 & 0.041545898280139 & 0.0428490590020055 & 0.030912887417959 & 0.020485390005927
    \\
    \bottomrule
  \end{tabular}
\end{table}

Comparing \cref{tab:OU-gen-sim} to \cref{tab:OU-2} we see that a
fixed, known $\sigma$ significantly increases the performance of the
estimator especially when the starting point for, e.g. $\sigma$ is
further away. Nevertheless, the estimation results in
\cref{tab:OU-gen-sim} are still quite reliable for most parameter
settings.

\subsection{Generalized modulated OU process}
\label{sec:gen-OU-sim}

The generalized modulated OU process is defined via equation
\cref{eq:MA-parametric} with kernel function
\begin{equation*}
  g_{\theta}(s) = s^\sigma \exp(-\lambda s) \1_{(0, \infty)}(s), \qquad s \in \bbR,
\end{equation*}
where $\theta = (\sigma, \lambda) \in (0, \infty)^2$. This class of
kernels has not been shown to satisfy the main assumption of the
paper, but it is easily seen that $m = 1$ is not enough to identify
the parameters in $\theta$. We take $m = 2$ and set the number of
weights to $20$, hence the weighted integral approximation is based on
$20^2 = 400$ nodes. Moreover, the weight function is as in
\cref{eq:gauss-weight} with $\nu = 0.1$. Lastly, we pick as starting
point for the minimization algorithm
$(\beta, \lambda, \sigma) = (1.5, 1, 1)$.

\Cref{tab:mod-OU-05,tab:mod-OU-2} report the finite sample performance
of the estimators for $n = \num{10000}$, and $\sigma = 0.5$ and
$\sigma = 2$, respectively. We observe a good performance of the
estimator $\widehat{\beta}_n$ and a very unsatisfactory performance of
the estimator $\widehat{\sigma}_n$. We conjecture that the reason for
the suboptimal performance lies in the choice of the weight function
$w$, which may have opposite effects on different parameters of the
model, as well as in the minimization algorithm, since it has a
tendency to get stuck in local minima.

\begin{table}[ht]
  \caption{Absolute value of bias (|Bias|) and standard deviation
    for $n = \num{10000}$ and $\sigma = 0.5$ for the generalized
    modulated OU kernel}
  \label{tab:mod-OU-05}
  \centering
  \begin{tabular}{l l *{6}{S}}
    & & \multicolumn{3}{c}{|Bias|} & \multicolumn{3}{c}{Std}
    \\
    \cmidrule(lr){3-5}
    \cmidrule(lr){6-8}
    $\beta$ & $\lambda$ & $\widehat{\beta}_n$ & $\widehat{\lambda}_n$ & $\widehat{\sigma}_n$ & $\widehat{\beta}_n$ & $\widehat{\lambda}_n$ & $\widehat{\sigma}_n$
    \\
    \midrule
    1.8 & 0.5 & 0.01109752 & 0.15848043 & 0.59822835 & 0.04604744 & 0.04441130 & 0.13533531
    \\
    & 0.75 & 0.01960873 & 0.09248290 & 0.56204938 & 0.05418306 & 0.04938010 & 0.17175356
    \\
    & 1.25 & 0.014718235 & 0.006410635 & 0.067124237 & 0.08134501 & 0.11519645 & 0.09459575
    \\
    & 1.5 & 0.002932964 & 0.036121460 & 0.096883978 & 0.08562085 & 0.10055721 & 0.17283024
    \\
    \midrule
    1.2 & 0.5 & 0.006210926 & 0.188094896 & 0.696666524 & 0.03494669 & 0.07322044 & 0.24145869
    \\
    & 0.75 & 0.004444959 & 0.178696908 & 0.808751130 & 0.04396596 & 0.04434375 & 0.04856469
    \\
    & 1.25 & 0.010335131 & 0.008946981 & 0.612374904 & 0.04679042 & 0.05941702 & 0.13065216
    \\
    & 1.5 & 0.01097045 & 0.08860120 & 0.58690032 & 0.05192269 & 0.09514965 & 0.21149758
    \\
    \bottomrule
  \end{tabular}
\end{table}

\begin{table}[ht]
  \caption{Absolute value of bias (|Bias|) and standard deviation
    (Std) for $n = \num{10000}$ and $\sigma = 2$ for the generalized
    modulated OU kernel}
  \label{tab:mod-OU-2}
  \centering
  \begin{tabular}{l l *{6}{S}}
    & & \multicolumn{3}{c}{|Bias|} & \multicolumn{3}{c}{Std}
    \\
    \cmidrule(lr){3-5}
    \cmidrule(lr){6-8}
    $\beta$ & $\lambda$ & $\widehat{\beta}_n$ & $\widehat{\lambda}_n$ & $\widehat{\sigma}_n$ & $\widehat{\beta}_n$ & $\widehat{\lambda}_n$ & $\widehat{\sigma}_n$
    \\
    \midrule
    1.8 & 0.5 & 0.007610437 & 0.031373960 & 0.145797235 & 0.1730268 & 0.2051865 & 0.7289414
    \\
    & 0.75 & 0.00279739 & 0.20887866 & 0.65147455 & 0.03091808 & 0.02413394 & 0.07290650
    \\
    & 1.25 & 0.03137985 & 0.25207178 & 1.32728912 & 0.07269532 & 0.06410093 & 0.12444681
    \\
    & 1.5 & 0.062620147 & 0.006574832 & 1.314710523 & 0.08887894 & 0.10854958 & 0.17250641
    \\
    \midrule
    1.2 & 0.5 & 0.01646307 & 0.02204524 & 0.15310884 & 0.2723836 & 0.1922562 & 0.6672994
    \\
    & 0.75 & 0.001060276 & 0.206548948 & 0.679326158 & 0.03349057 & 0.05210383 & 0.16111681
    \\
    & 1.25 & 0.003685735 & 0.206800080 & 0.768501583 & 0.04735643 & 0.04541476 & 0.03623216
    \\
    & 1.5 & 0.001943202 & 0.172040298 & 1.017552169 & 0.06346027 & 0.09950631 & 0.19276847
    \\
    \bottomrule
  \end{tabular}
\end{table}

\appendix

\section{Proofs}

\label{sec4}

In this section $C > 0$ denotes a generic constant, which may change
from line to line. Recall moreover the shorthand
$\xi = (\beta, \theta)$ for the joint parameters.

\subsection{Proof of Theorem~\ref{thm:MCE-conv}}

This section is devoted to the proof of \cref{thm:MCE-conv}, which is
divided into three steps. In \cref{sec:limit} we analyse the
smoothness properties of the limiting Gaussian field
$(G_u)_{u \in \smash{\bbR_+^m}}$. \Cref{sec4.1.2} presents a general
weak convergence statement for integrals of stochastic
processes. Finally, \cref{sec4.1.3} demonstrates proofs of the
convergence results in \cref{thm:MCE-conv}.

\subsubsection{The limiting Gaussian field}
\label{sec:limit}

To characterize the covariance of the asymptotic Gaussian field
$(G_u)_{u \in \smash{\bbR_+^m}}$ we define a dependence measure
between two $m$-dimensional stable vectors
$Y = (\int h_1 \Di L, \ldots, \int h_m \Di L)$ and
$Z = (\int g_1 \Di L, \ldots, \int g_m \Di L)$:
\begin{equation*}
  U_{Y, Z}(u, v) \coloneqq \Expt[\big]{\e^{\I \iprod{(u, v), (Y, Z)}_{\bbR^{2m}} } }
  - \Expt[\big]{\e^{\I \iprod{u, Y}_{\bbR^m}}} \Expt[\big]{\e^{\I \iprod{v, Z}_{\bbR^m}}},
  \qquad u, v \in \bbR^m.
\end{equation*}
This is a straightforward multivariate extension of the measure
defined in \cite{PipiBoun}. We now apply \cref{thm:MA-CLT} in
conjunction with the smooth and bounded functions
\begin{equation*}
  f_u(x) = \cos(\iprod{u, x}_{\bbR^m}),  \qquad u, x \in \bbR^m,
\end{equation*}
such that we obtain the finite dimensional convergence of the
processes:
\begin{equation}
  \label{eq:fidi-conv-char-functions}
  \sqrt{n}(\varphi_n(u) - \varphi_{\xi}(u))_{u \in \bbR_+^m} \fidiTo[n \to \infty] (G_u)_{u \in \bbR_+^m}.
\end{equation}
Let $Z_0 = (X_1, \ldots, X_m)$ and
$Z_{\ell} = (X_{1 + \ell}, \ldots, X_{m + \ell})$, then the covariance
function $R : \bbR^m \times \bbR^m \to \bbR$ of $G$ is,
cf. \cref{eq:asymp-cov}, given by
\begin{equation*}
  R(u, v) = \sum_{\ell \in \bbZ} r_\ell(u, v),
\end{equation*}
where for $\ell \in \bbZ$
\begin{equation*}
  r_{\ell}(u, v) = \cov(\cos(\iprod{u, Z_0}), \cos(\iprod{v, Z_\ell})), \qquad u, v \in \bbR^m.
\end{equation*}
We will now prove that there exists a version of $G$, which is locally
Hölder continuous up to any order less than $\beta/4$. By Kolmogorov's
criteria and Gaussianity it is enough to prove that for any $T > 0$
there exists a constant $C_T \geq 0$ such that
\begin{equation}
  \label{eq:holder-cont}
  \Expt[\big]{(G_u - G_v)^2} \leq C_T \norm{u - v}^{\beta/2} \qquad \text{for all $u, v \in [0, T]^m$,}
\end{equation}
where $\norm{u - v} = \sum_{i = 1}^{m} \abs{u_i - v_i}$ denotes the
$\ell_1$-norm throughout the rest of this paper. To prove
\cref{eq:holder-cont} note the decomposition
\begin{equation*}
  \Expt[\big]{(G_u - G_v)^2} = R(u, u) - R(u, v) + R(v, v) - R(u, v).
\end{equation*}
Hence by symmetry it suffices to consider the term
\begin{equation*}
  R(u, u) - R(u, v) = \sum_{\ell \in \bbZ} (r_\ell(u, u) - r_{\ell}(u, v)).
\end{equation*}
The main difficulty lies in establishing a bound on
$r_\ell(u, u) - r(u, v)$ which is both
$\frac{\beta}{2}$-Hölder in $(u, v)$ and summable in $\ell$.
Using the standard identity $\cos(x) = (\e^{\I x} + \e^{-\I x})/2$ and
the symmetry of $L_1$ we deduce the identity
\begin{equation*}
  2(r_{\ell}(u, u) - r_{\ell}(u, v)) = [U_{Z_0, Z_\ell}(u, -u) - U_{Z_0, Z_\ell}(u, -v)]
  + [U_{Z_0, Z_\ell}(u, u) - U_{Z_0, Z_\ell}(u, v)].
\end{equation*}
The two terms in the square brackets are treated very similarly so we
consider only the first one. Before diving into the tedious 
calculations we recall the following inequalities for $x, y \in \bbR$:
\begin{alignat}{2}
  \abs{\e^{-x} - \e^{-y}} &\leq \abs{x - y} &&\qquad \text{if
                                               $x, y \geq 0$}
                                               \label{eq:exp-lipschitz}
  \\
  \abs{x + y}^{\beta} &\leq \abs{x}^{\beta} + \abs{y}^{\beta} &&\qquad \text{for $\beta \in (0, 1]$}
                                                                 \label{eq:subadditivity}
  \\
  \abs{\abs{x}^{\beta} - \abs{y}^{\beta}} &\leq \abs{x - y}^{\beta} &&\qquad
                                                                       \text{for $\beta \in (0, 1]$}
                                                                       \label{eq:power-is-holder}
  \\
  \abs{\abs{x + y}^{\beta} - \abs{x}^{\beta} - \abs{y}^{\beta}}
                          &\leq \abs{x y}^{\beta/2} &&\qquad \text{for $\beta \in (0, 2)$.}
                                                       \label{eq:remainder-in-subadditive}
\end{alignat}
Define additionally the two quantities
\begin{equation*}
  \rho_i
  = \int_{\bbR} \abs{g_{\xi}(x) g_{\xi}(x + i)}^{\beta/2} \Di x
  \qquad \text{and} \qquad
  \mu_i
  = \int_{-m}^{\infty} \abs{g_{\xi}(x + i)}^\beta \Di x, \qquad i \in \bbZ.
\end{equation*}
We shall need the following lemma.
\begin{lemma}
  \label{lem:rho-mu-ineq}
  There exists a constant $C > 0$ such that for any $i \in \bbN$
  \begin{tlist}
  \item\label{it:lem:rho-mu-ineq:1}
    $\rho_i \leq C i^{-\alpha \beta/2}$.

  \item\label{it:lem:rho-mu-ineq:2} If $i > m$ then
    $\mu_i \leq C (i - m)^{1 - \alpha \beta}$.
  \end{tlist}
\end{lemma}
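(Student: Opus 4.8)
The plan is to exploit the explicit envelope \cref{eq:kernel-ass}, which in particular forces $g_\xi$ to vanish on $(-\infty,0)$ since the right-hand side of \cref{eq:kernel-ass} is zero for $x\le 0$. Throughout I write $\alpha,\kappa,K$ for the constants attached to the fixed parameter $\xi$ via Assumption~(B)\cref{it:ass:B:1}, so that $\alpha\beta>2$ and $\kappa\beta>-1$ (indeed $\kappa\beta/2>-1/2$); the proof is purely a matter of plugging these envelopes in and bookkeeping exponents.

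For \cref{it:lem:rho-mu-ineq:1}, since $i\ge 1$ we have $x+i\ge 1$ for every $x\ge 0$, hence $\abs{g_\xi(x+i)}^{\beta/2}\le K^{\beta/2}(x+i)^{-\alpha\beta/2}$. I would split $\rho_i$ at $x=1$. On $[0,1)$ I bound $\abs{g_\xi(x)}^{\beta/2}\le K^{\beta/2}x^{\kappa\beta/2}$ and $(x+i)^{-\alpha\beta/2}\le i^{-\alpha\beta/2}$, and the leftover integral $\int_0^1 x^{\kappa\beta/2}\,\di x$ is finite because $\kappa\beta/2>-1$. On $[1,\infty)$ I use $\abs{g_\xi(x)}^{\beta/2}\le K^{\beta/2}x^{-\alpha\beta/2}$ and split once more at $x=i$: for $1\le x\le i$ I use $(x+i)^{-\alpha\beta/2}\le i^{-\alpha\beta/2}$ together with $\int_1^\infty x^{-\alpha\beta/2}\,\di x<\infty$ (as $\alpha\beta/2>1$), while for $x\ge i$ I use $(x+i)^{-\alpha\beta/2}\le x^{-\alpha\beta/2}$ and $\int_i^\infty x^{-\alpha\beta}\,\di x=(\alpha\beta-1)^{-1}i^{1-\alpha\beta}$, noting $i^{1-\alpha\beta}=i^{1-\alpha\beta/2}\,i^{-\alpha\beta/2}\le i^{-\alpha\beta/2}$ since $1-\alpha\beta/2<0$. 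Collecting the three contributions yields $\rho_i\le C i^{-\alpha\beta/2}$.

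For \cref{it:lem:rho-mu-ineq:2}, the substitution $y=x+i$ turns $\mu_i$ into $\int_{i-m}^{\infty}\abs{g_\xi(y)}^\beta\,\di y$, and $i-m>0$ by hypothesis. If $i-m\ge 1$ then on the entire range of integration $\abs{g_\xi(y)}^\beta\le K^\beta y^{-\alpha\beta}$, so $\mu_i\le K^\beta\int_{i-m}^{\infty}y^{-\alpha\beta}\,\di y=K^\beta(\alpha\beta-1)^{-1}(i-m)^{1-\alpha\beta}$, which is the claim. If $0<i-m<1$ I split at $y=1$: the tail $\int_1^\infty\abs{g_\xi(y)}^\beta\,\di y$ is a finite constant and $\int_{i-m}^{1}\abs{g_\xi(y)}^\beta\,\di y\le K^\beta\int_0^1 y^{\kappa\beta}\,\di y<\infty$, so $\mu_i\le C$; since $1-\alpha\beta<0$ and $0<i-m<1$ we have $(i-m)^{1-\alpha\beta}>1$, hence $\mu_i\le C\le C(i-m)^{1-\alpha\beta}$ in this case too.

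The argument is entirely elementary. The only point demanding any care is the exponent arithmetic in \cref{it:lem:rho-mu-ineq:1} — specifically the second split at $x=i$ and the observation that the "tail" contribution $i^{1-\alpha\beta}$ is swallowed by the target rate $i^{-\alpha\beta/2}$ precisely because $\alpha\beta>2$ — so I do not expect a genuine obstacle here.
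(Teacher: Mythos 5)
Your proof is correct, and for part (ii) it is the same computation the paper gives (bound $\abs{g_\xi(x+i)}$ by $K(x+i)^{-\alpha}$ since $x+i>1$ on the domain of integration, then integrate). For part (i) the paper simply cites an external lemma (\cite[Lemma~4.1]{BassBerr}) rather than writing out the argument, but your self-contained splitting of the integral at $x=1$ and $x=i$, using $\kappa\beta/2>-1$ near the origin and $\alpha\beta/2>1$ at infinity, is exactly the kind of elementary envelope-plus-exponent bookkeeping that reference performs, so there is nothing substantively different to flag.
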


\begin{proof}
  \cref{it:lem:rho-mu-ineq:1} follows as in
  \cite[Lemma~4.1]{BassBerr}. For \cref{it:lem:rho-mu-ineq:2} note if
  $k > m$ then $x + k > 1$ for any $x > -m$, so according to
  assumption \cref{eq:kernel-ass}
  \begin{equation*}
    \mu_i \leq C \int_{-m}^{\infty} (x + k)^{-\alpha \beta} = C (k - m)^{1 - \alpha \beta},
  \end{equation*}
  where we used that $\alpha \beta > 2$.
\end{proof}

\noindent Using the expression for the characteristic function of a
symmetric $\beta$-stable random variable we decompose as follows
\begin{align*}
  \MoveEqLeft[0] U_{Z_0, Z_\ell}(u, - u) - U_{Z_0, Z_\ell}(u, -v)
  \\
  &= \exp\Bigl(-\norm[\Big]{\sum_{i = 1}^{m} u_i(g_{\xi}(i - \argmrk) - g_{\xi}(i + \ell - \argmrk) )}_{\beta}^{\beta} \Bigr)
    - \exp\Bigl(-2 \norm[\Big]{\sum_{i = 1}^{m} u_i g_{\xi}(i - \argmrk)}_{\beta}^{\beta} \Bigr)
  \\
  &- \biggl[\exp\Bigl(-\norm[\Big]{\sum_{i = 1}^{m} u_i g(i - \argmrk) - v_i g(i + \ell - \argmrk)}_{\beta}^{\beta} \Bigr)
  \\
  &-\exp\Bigl(-\norm[\Big]{\sum_{i = 1}^{m} u_i g_{\xi}(i - \argmrk)}_{\beta}^{\beta}
    - \norm[\Big]{\sum_{i = 1}^{m} v_i g_{\xi}(i + \ell - \argmrk)}_{\beta}^{\beta}\Bigr)
    \biggr]
  \\
  &= \exp\Bigl(2 \norm[\Big]{\sum_{i = 1}^{m} u_i g_{\xi}(i - \argmrk)}_{\beta}^{\beta} \Bigr)
  \\
  &\times \biggl[\exp\Bigl(-2 \norm[\Big]{\sum_{i = 1}^{m} u_i g_{\xi}(i - \argmrk)}_{\beta}^{\beta} \Bigr) - \exp\Bigl( - \norm[\Big]{\sum_{i = 1}^{m} u_i g_{\xi}(i - \argmrk)}_{\beta}^{\beta} - \norm[\Big]{\sum_{i = 1}^{m} v_i g_{\xi}(i - \argmrk)}_{\beta}^{\beta} \Bigr) \biggr]
  \\
  &\times \biggl[\exp\Bigl( - \norm[\Big]{\sum_{i = 1}^{m} u_i (g_{\xi}(i - \argmrk) - g_{\xi}(i + \ell - \argmrk))}_{\beta}^{\beta} \Bigr) - \exp\Bigl(-2\norm[\Big]{\sum_{i = 1}^{m} u_i g_{\xi}(i - \argmrk)}_{\beta}^{\beta}\Bigr) \biggr]
  \\
  &+ \exp\Bigl(-\norm[\Big]{\sum_{i = 1}^{m} u_i g_{\xi}(i - \argmrk)}_{\beta}^{\beta} - \norm[\Big]{\sum_{i = 1}^{m} v_i g_{\xi}(i - \argmrk)}_{\beta}^{\beta}\Bigr)
  \\
  &\times \biggl[ \exp\Bigl(-\norm[\Big]{\sum_{i = 1}^{m} u_i (g_{\xi}(i - \argmrk) - g_{\xi}(i + \ell - \argmrk)) }_{\beta}^{\beta} + 2 \norm[\Big]{\sum_{i = 1}^{m} u_i g_{\xi}(i - \argmrk) }_{\beta}^{\beta}\Bigr)
  \\
  &- \exp\Bigl(-\norm[\Big]{\sum_{i = 1}^{m} u_i g_{\xi}(i - \argmrk) - v_i g_{\xi}(i + \ell - \argmrk)}_{\beta}^{\beta} + \norm[\Big]{\sum_{i = 1}^{m} u_i g_{\xi}(i - \argmrk)}_{\beta}^{\beta} + \norm[\Big]{\sum_{i = 1}^{m} v_i g_{\xi}(i - \argmrk)}_{\beta}^{\beta}\Bigr)\biggr]
  \\
  &\eqqcolon r^1_{\ell}(u, v) + r_{\ell}^2(u, v).
\end{align*}
For the first term, $r_\ell^1$, we notice that the exponential term in
front is bounded in $u \in [0, T]^m$ (and of course in $\ell \in \bbZ$
as well), hence by \cref{eq:exp-lipschitz}
\begin{align*}
  r^1_{\ell}(u, v) &\leq C_T \abs[\bigg]{\norm[\Big]{\sum_{i = 1}^{m} u_i g_{\xi}(i - \argmrk) }_{\beta}^{\beta} - \norm[\Big]{\sum_{i = 1}^{m} v_i g_{\xi}(i - \argmrk) }_{\beta}^{\beta}}
  \\
                   &\quad \times \abs[\bigg]{\norm[\Big]{\sum_{i = 1}^{m} u_i (g_{\xi}(i - \argmrk) - g_{\xi}(i + \ell - \argmrk))}_{\beta}^{\beta} - 2\norm[\Big]{\sum_{i = 1}^{m} u_i g_{\xi}(i - \argmrk)}_{\beta}^{\beta}}.
\end{align*}
The first absolute value term will give the Hölder continuity of order
$\beta/2$ and the second will ensure summability in $\ell$. For the
first term we may bound as follows in the case $\beta \in (0, 1]$
using \cref{eq:power-is-holder,eq:subadditivity}
\begin{align*}
  \abs[\bigg]{\norm[\Big]{\sum_{i = 1}^{m} u_i g_{\xi}(i - \argmrk) }_{\beta}^{\beta} - \norm[\Big]{\sum_{i = 1}^{m} v_i g_{\xi}(i - \argmrk) }_{\beta}^{\beta}}
  &\leq \int_{\bbR} \Bigl(\sum_{i = 1}^{m} \abs{u_i - v_i} \abs{g_{\xi}(i - x)}\Bigr)^{\beta} \Di x
  \\
  &\leq \norm{u - v}^{\beta} \sum_{i = 1}^{m} \int_{\bbR} \abs{g_{\xi}(i - x)}^{\beta} \Di x
  \\
  &\leq C_T \norm{u - v}^{\beta / 2}.
\end{align*}
If instead $\beta > 1$, then the map is
$u \mapsto \norm{\sum_{i = 1}^{m} u_i g_{\xi}(i -
  \argmrk)}_{\beta}^{\beta}$ is continuously differentiable, hence by
the mean value theorem it is Hölder continuous of any order less than
or equal to~$1$, and since $\beta \in (0, 2)$ Hölder continuity of
order $\beta/2$ then holds. For the second absolute value term it
follows by \cref{eq:remainder-in-subadditive} and
\cref{eq:subadditivity}
\begin{align*}
  \MoveEqLeft \abs[\bigg]{\norm[\Big]{\sum_{i = 1}^{m} u_i (g_{\xi}(i - \argmrk)
  - g_{\xi}(i + \ell - \argmrk))}_{\beta}^{\beta}
  - 2 \norm[\Big]{\sum_{i = 1}^{m} u_i g_{\xi}(i - \argmrk)}_{\beta}^{\beta}}
  \\
  &= \biggl\lvert\norm[\Big]{\sum_{i = 1}^{m} u_i (g_{\xi}(i - \argmrk)
    - g_{\xi}(i + \ell - \argmrk))}_{\beta}^{\beta}
    - \norm[\Big]{\sum_{i = 1}^{m} u_i g_{\xi}(i - \argmrk)}_{\beta}^{\beta}
  \\
  &\quad - \norm[\Big]{- \sum_{i = 1}^{m} u_i g_{\xi}(i + \ell - \argmrk)}_{\beta}^{\beta}
    \biggr\rvert
  \\
  &\leq 2 \norm[\Big]{\Bigl(\sum_{i = 1}^{m} u_i g_{\xi}(i - \argmrk)\Bigr)
    \Bigl(\sum_{k = 1}^{m} u_k g_{\xi}(k + \ell - \argmrk)\Bigr)}_{\beta/2}^{\beta/2}
  \\
  &\leq 2 T^{\beta} \sum_{i, k = 1}^{m} \norm{g_{\xi}(i - \argmrk) g_{\xi}(k + \ell - \argmrk)}_{\beta/2}^{\beta/2}
  \\
  &= 2 T^{\beta} \sum_{i, k = 1}^{m} \rho_{\ell + k - i},
\end{align*}
which is summable in $\ell$ by \cref{lem:rho-mu-ineq} and the
assumption $\alpha \beta > 2$. We now turn our attention to the more
complicated second term $r^2_\ell(u, v)$. Utilizing
\cref{eq:exp-lipschitz} we have that
\begin{align*}
  r_\ell^2(u, v) &\leq \biggl\lvert \norm[\Big]{\sum_{i = 1}^{m} u_i (g_{\xi}(i - \argmrk)
                   - g_{\xi}(i + \ell - \argmrk))}_{\beta}^{\beta}
                   - 2 \norm[\Big]{\sum_{i = 1}^{m} u_i g_{\xi}(i - \argmrk) }_{\beta}^{\beta}
  \\
                 &\quad + \norm[\Big]{\sum_{i = 1}^{m} v_i g_{\xi}(i - \argmrk)}_{\beta}^{\beta}
                   + \norm[\Big]{\sum_{i = 1}^{m} u_i g_{\xi}(i  - \argmrk)}_{\beta}^{\beta}
  \\
                 &\quad - \norm[\Big]{\sum_{i = 1}^{m} u_i g_{\xi}(i - \argmrk) - v_i g_{\xi}(i + \ell - \argmrk)}_{\beta}^{\beta}
                   \biggr\rvert
  \\
                 &= \biggl\lvert \int_{-m}^{\infty} \biggl[ \abs[\Big]{\sum_{i = 1}^{m} u_i (g_{\xi}(x + i) - g_{\xi}(i + \ell + x))}^{\beta}
  \\
                 &\quad - \abs[\Big]{\sum_{i = 1}^{m} u_i g_{\xi}(i + x) - v_i g_{\xi}(i + \ell + x)}^{\beta} \biggr]
  \\
                 &\quad + \biggl[\abs[\Big]{\sum_{i = 1}^{m} v_i g_{\xi}(i + \ell + x)}^{\beta}
                   - \abs[\Big]{\sum_{i = 1}^{m} u_i g_{\xi}(i + \ell + x)}^{\beta}
                   \biggr]\Di x\biggr\rvert
  \\
                 &\leq \int_{-m}^{\infty} \biggl\lvert \abs[\Big]{\sum_{i = 1}^{m} u_i (g_{\xi}(x + i) - g_{\xi}(i + \ell + x))}^{\beta}
  \\
                 &\quad - \abs[\Big]{\sum_{i = 1}^{m} u_i g_{\xi}(i + x) - v_i g_{\xi}(i + \ell + x)}^{\beta} \biggr\rvert \Di x
  \\
                 &\quad + \int_{-m}^{\infty} \biggl\lvert \abs[\Big]{\sum_{i = 1}^{m} v_i g_{\xi}(i + \ell + x)}^{\beta}
                   - \abs[\Big]{\sum_{i = 1}^{m} u_i g_{\xi}(i + \ell + x)}^{\beta}
                   \biggr\rvert\Di x
  \\
                 &\eqqcolon r_\ell^{2, 1}(u, v) + r_\ell^{2, 2}(u, v).
\end{align*}
We deal first with the second term $r_\ell^{2, 2}$. First, if
$\beta \in (0, 1]$, then by \cref{eq:power-is-holder,eq:subadditivity}
\begin{equation*}
  r_\ell^{2, 2}(u, v) \leq \int_{-m}^{\infty} \abs[\Big]{\sum_{i = 1}^{m} (u_i - v_i) g_{\xi}(i + \ell + x)}^\beta \Di x
  \leq \norm{u - v}^{\beta} \sum_{i = 1}^{m} \mu_{i + \ell},
\end{equation*}
and by \cref{lem:rho-mu-ineq}\cref{it:lem:rho-mu-ineq:2} we obtain a
bound which is summable in $\ell > m$. If instead $\beta \in (1, 2)$
the map
\begin{equation*}
  h(u) = \int_{-m}^{\infty} \abs[\Big]{\sum_{i = 1}^{m} u_i g_{\xi}(i + \ell + x)}^{\beta} \Di x,
  \qquad u \in \bbR^m,
\end{equation*}
is continuously differentiable and the absolute value of the
derivative is bounded as follows for any $u \in [0, T]^m$ and
$\ell > m$:
\begin{align*}
  \abs[\Big]{\frac{\partial}{\partial u_k} h(u)}
  &\leq \int_{-m}^{\infty} \abs[\Big]{\sum_{i = 1}^{m} u_i g_{\xi}(i + \ell + x)}^{\beta - 1}
    \abs{g_{\xi}(k + \ell + x)} \Di x
  \\
  &\leq T^{\beta - 1} \sum_{i = 1}^{m} \int_{-m}^{\infty} \abs{g_{\xi}(i + \ell + x)}^{\beta - 1} \abs{g_{\xi}(k + \ell + x)} \Di x
  \\
  &\leq C T^{\beta - 1} m (\ell - m)^{1 - \alpha \beta},
\end{align*}
where we have argued as in
\cref{lem:rho-mu-ineq}\cref{it:lem:rho-mu-ineq:2} in the last
inequality. Hence, in the case $\beta \in (1, 2)$ we obtain by the
mean value theorem
\begin{equation*}
  r_\ell^{2, 2}(u, v) \leq \sup_{z \in [0, T]^m} \norm{\nabla h(z)} \norm{u - v}
  \leq C_T (\ell - m)^{1 - \alpha \beta} \norm{u - v}^{\beta},
\end{equation*}
and as $\alpha \beta > 2$ we have obtained a bound summable in $\ell$.

It remains to consider the term $r_\ell^{2, 1}$. Here it follows from
the inequality
$\abs{\abs{x}^{\beta} - \abs{y}^{\beta}} \leq \abs{x^2 - y^2}^{\beta /
  2}$ and the triangle inequality that the integrand is bounded by
\begin{align*}
  \MoveEqLeft \abs[\bigg]{\abs[\Big]{\sum_{i = 1}^{m} u_i(g_{\xi}(i + x) - g_{\xi}(i + \ell + x) )}^{\beta} - \abs[\Big]{\sum_{i = 1}^{m} u_ig_{\xi}(i + x) - v_i g_{\xi}(i + \ell + x)}^{\beta}}
  \\
  &\leq \biggl\lvert \sum_{i, k = 1}^{m} u_i u_k (g_{\xi}(i + x) - g_{\xi}(i + \ell + x))(g_{\xi}(k + x) - g_{\xi}(k + \ell + x))
  \\
  &\quad - (u_i g_{\xi}(i + x) - v_i g_{\xi}(i + \ell + x)) (u_k g_{\xi}(k + x) - v_k g_{\xi}(k + \ell + x))
    \biggr\rvert^{\beta/2}
  \\
  &= \biggl\lvert \sum_{i, k = 1}^{m} \Bigl[ (u_i u_k - v_i v_k) g_{\xi}(i + \ell + x) g_{\xi}(k + \ell + x)
  \\
  &\quad + u_i (v_k - u_k) g_{\xi}(i + x) g_{\xi}(k + \ell + x)
  \\
  &\quad + u_k (v_i - u_i) g_{\xi}(i + \ell + x) g_{\theta, \beta}(k + x)
    \Bigr]\biggr\rvert^{\beta/2}
  \\
  &\leq C_T \norm{u - v}^{\beta / 2} \sum_{i, k = 1}^{m} \Bigl[ \abs{g_{\xi}(i + \ell + x) g_{\xi}(k + \ell + x)}^{\beta / 2}  + \abs{g_{\xi}(i + x) g_{\xi}(k + \ell + x)}^{\beta / 2}\Bigr].
\end{align*}
Hence, we obtain with arguments as in
\cref{lem:rho-mu-ineq}\cref{it:lem:rho-mu-ineq:2} that
\begin{equation*}
  r_{\ell}^{2, 1}(u, v) \leq C_T \norm{u - v}^{\beta / 2} \Bigl((\ell - m)^{1 - \alpha \beta} + \sum_{i, k = 1}^{m} \rho_{\ell + k - i} \Bigr),
\end{equation*}
which is summable in $\ell$ as $\alpha \beta > 2$.

Lastly, we shall prove that $(G_u)_{u \in \bbR_+^m}$ has paths in
$\cL_w^1(\bbR_+^m)$ almost surely, such that
$\int_{\smash{\bbR_+^m}} G_u w(u) \Di u$ is well-defined. A sufficient
criteria for this is
$\int_{\smash{\bbR_+^m}} \var[G_u]^{\smash{1 / 2}} w(u) \Di u <
\infty$, since $G$ is centred. For this we need to study
$r_{\ell}(u, u)$ again. Recall that
\begin{equation*}
  r_\ell(u, u) = U_{Z_0, Z_\ell}(u, -u) + U_{Z_0, Z_\ell}(u, u).
\end{equation*}
As both terms are treated almost identically it suffices to consider
the first one. Here it follows from the inequality
$\abs{\e^x - 1} \leq \e^{\abs{x}} \abs{x}$, $x \in \bbR$, and
\cref{eq:remainder-in-subadditive}, that
\begin{align*}
  \MoveEqLeft \abs{U_{Z_0, Z_\ell}(u, -u)}
  \\
  &= \abs[\bigg]{\exp\Bigl(-\norm[\Big]{\sum_{i = 1}^{m} u_i (g_{\xi}(i - \argmrk) - g_{\xi}(i + \ell - \argmrk))}_{\beta}^{\beta} \Bigr)
    - \exp\Bigl(-2 \norm[\Big]{\sum_{i = 1}^{m} u_i g_{\xi}(i - \argmrk)}_{\beta}^{\beta}\Bigr)
    }
  \\
  &\leq \exp\Bigl(-2\norm[\Big]{\sum_{i = 1}^{m} u_i g_{\xi}(i - \argmrk)}_{\beta}^{\beta}\Bigr)
  \\
  &\quad \times \abs[\bigg]{\norm[\Big]{\sum_{i = 1}^{m} u_i (g_{\xi}(i - \argmrk) - g_{\xi}(i + \ell - x))}_{\beta}^{\beta} - 2 \norm[\Big]{\sum_{i = 1}^{m} u_i g_{\xi}(i - \argmrk)}_{\beta}^{\beta}}
  \\
  &\quad \times \exp\biggl(\abs[\bigg]{\norm[\Big]{\sum_{i = 1}^{m} u_i (g_{\xi}(i - \argmrk) - g_{\xi}(i + \ell - x))}_{\beta}^{\beta} - 2 \norm[\Big]{\sum_{i = 1}^{m} u_i g_{\xi}(i - \argmrk)}_{\beta}^{\beta}} \biggr)
  \\
  &\leq \exp\Bigl(-2 \norm[\Big]{\sum_{i = 1}^{m} u_i g_{\xi}(i - \argmrk)}_{\beta}^{\beta} + 2\norm[\Big]{\Bigl(\sum_{i = 1}^{m} u_i g_{\xi}(i - \argmrk) \Bigr) \Bigl(\sum_{i = 1}^{m} u_i g_{\xi}(i + \ell - \argmrk) \Bigr)}_{\beta/2}^{\beta/2}\Bigr)
  \\
  &\quad \times \norm{u}^\beta \sum_{i, k = 1}^{m} \rho_{\ell + k - i}
  \\
  &\leq \norm{u}^{\beta} \sum_{i, k = 1}^{m} \rho_{\ell + k - i},
\end{align*}
where we have used the Cauchy--Schwarz inequality in the last
line. Summing over $\ell$ yields an element in $\cL_w^1(\bbR_+^m)$ by
the assumption on the weight function $w$.

\subsubsection{Convergence of integral functionals} \label{sec4.1.2}

In \cref{sec:limit} we saw that the empirical characteristic functions
suitably scaled and centred converge to a Gaussian process in finite
dimensional sense. We wish to extend this convergence to integrals of
our processes. For this we need to extend \cite[Lemma~1]{LjunANot} to
a multivariate case. For $x \in \bbR$ let $\floor{x}$ denote the
largest integer $l$ such that $l \leq x$ and for a vector
$u = (u_1, \ldots, u_m) \in \bbR^m$ we let
$\floor{u} = (\floor{u_1}, \ldots, \floor{u_m})$.

\begin{lemma}
  \label{lem:int-conv}
  Let $(Y_u^n)_{u \in \bbR_+^m}$ and $(Y_u)_{u \in \bbR_+^m}$ be
  continuous random fields with $Y^n \fidiTo Y$. Assume that
  $\int_{\bbR_+^m} \Expt{\abs{Y_u^n}} \Di u < \infty$ and
  $\int_{\bbR_+^m} \Expt{\abs{Y_u}} \Di u < \infty$, and set for
  $k, \ell, n \in \bbN$
  \begin{equation*}
    X_{n, k, \ell} = \int_{[0, \ell]^m} Y^n_{\floor{u k}/k} \Di u
    \qtq{and}
    X_{n, \ell} = \int_{[0, \ell]^m} Y^n_u \Di u.
  \end{equation*}
  Suppose that
  \begin{equation*}
    \lim_{\ell \to \infty} \limsup_{n \to \infty} \int_{\bbR_+^{m - 1 - i}} \int_{\ell}^{\infty} \int_{\bbR_+^i} \Expt{\abs{Y_u^n}} \Di u = 0, \qquad
    \lim_{k \to \infty} \limsup_{n \to \infty} \bbP(\abs{X_{n, k, \ell} - X_{n, \ell}} > \varepsilon) = 0
  \end{equation*}
  where the first convergence holds for all
  $i \in \Set{0, \ldots, m - 1}$ and the latter for all
  $\varepsilon, \ell > 0$. Then convergence in distribution holds:
  \begin{equation*}
    \int_{\bbR^m_+} Y_u^n \Di u \distTo \int_{\bbR^m_+} Y_u \Di u \qquad \text{as $n \to \infty$.}
  \end{equation*}
\end{lemma}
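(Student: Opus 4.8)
The plan is to mirror the strategy of \cite[Lemma~1]{LjunANot}, the $m=1$ version of the statement. The idea is to approximate the full integral $W_n:=\int_{\bbR_+^m}Y^n_u\Di u$ in two stages — first by its restriction $X_{n,\ell}=\int_{[0,\ell]^m}Y^n_u\Di u$ to a large box, then by the piecewise-constant discretization $X_{n,k,\ell}=\int_{[0,\ell]^m}Y^n_{\floor{uk}/k}\Di u$ — and to feed these into the classical subsequence criterion for weak convergence: if $W_{n,j}\distTo W_j$ as $n\to\infty$ for each $j$, $W_j\distTo W$ as $j\to\infty$, and $\lim_{j\to\infty}\limsup_{n\to\infty}\bbP(\abs{W_{n,j}-W_n}>\varepsilon)=0$ for every $\varepsilon>0$, then $W_n\distTo W$. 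Note that $W_n$ and $W:=\int_{\bbR_+^m}Y_u\Di u$ are a.s.\ finite by the assumed integrability of $u\mapsto\Expt{\abs{Y^n_u}}$ and $u\mapsto\Expt{\abs{Y_u}}$ together with Tonelli's theorem.

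The first step would be to handle the discretization at a fixed box size $\ell$. For fixed $k$ and $\ell$ the map $u\mapsto\floor{uk}/k$ takes only finitely many values on $[0,\ell]^m$, so $X_{n,k,\ell}=\sum_{j}c_j^{k,\ell}\,Y^n_{j/k}$ is a deterministic nonnegative linear combination of the values of $Y^n$ on the lattice $(\bbZ_{\ge 0}/k)^m$, the weights $c_j^{k,\ell}$ being the Lebesgue measures of the relevant cells. Hence, for fixed $(k,\ell)$, $X_{n,k,\ell}$ is a fixed continuous function of finitely many coordinates of $Y^n$, and $Y^n\fidiTo Y$ together with the continuous mapping theorem gives $X_{n,k,\ell}\distTo X_{k,\ell}:=\int_{[0,\ell]^m}Y_{\floor{uk}/k}\Di u$ as $n\to\infty$. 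Since almost every sample path $u\mapsto Y_u$ is continuous, hence uniformly continuous on the compact $[0,\ell]^m$, and $\floor{uk}/k\to u$ uniformly on $[0,\ell]^m$, we also get $X_{k,\ell}\to X_\ell:=\int_{[0,\ell]^m}Y_u\Di u$ almost surely as $k\to\infty$. Combining these facts with the second standing hypothesis, $\lim_{k\to\infty}\limsup_{n\to\infty}\bbP(\abs{X_{n,k,\ell}-X_{n,\ell}}>\varepsilon)=0$, via the subsequence criterion yields $X_{n,\ell}\distTo X_\ell$ as $n\to\infty$ for every fixed $\ell$.

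The second step would be to remove the truncation. By the integrability of $u\mapsto\Expt{\abs{Y_u}}$ the path $u\mapsto Y_u$ lies in $\cL^1(\bbR_+^m)$ almost surely, so $X_\ell\to W$ almost surely as $\ell\to\infty$ by dominated convergence. For the tail of $Y^n$, using $\bbR_+^m\setminus[0,\ell]^m\subseteq\bigcup_{i=1}^m\{u\in\bbR_+^m:u_i>\ell\}$, Markov's inequality and Tonelli's theorem give
\begin{equation*}
  \bbP\bigl(\abs{W_n-X_{n,\ell}}>\varepsilon\bigr)\le\frac{1}{\varepsilon}\int_{\bbR_+^m\setminus[0,\ell]^m}\Expt{\abs{Y^n_u}}\Di u\le\frac{1}{\varepsilon}\sum_{i=0}^{m-1}\int_{\bbR_+^{m-1-i}}\int_{\ell}^{\infty}\int_{\bbR_+^i}\Expt{\abs{Y^n_u}}\Di u,
\end{equation*}
and the first standing hypothesis says precisely that the right-hand side vanishes after taking $\limsup_{n\to\infty}$ followed by $\ell\to\infty$. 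A second application of the subsequence criterion — now with $X_{n,\ell}\distTo X_\ell$, $X_\ell\distTo W$, and the bound above — then gives $W_n\distTo W$, which is the claim.

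None of the individual ingredients — the continuous-mapping argument for the lattice sum, the uniform-continuity argument for the refinement $k\to\infty$, and the Markov estimate for the tail — is deep. The point that needs care is the two-index bookkeeping: keeping the finite-dimensional/continuous-mapping step strictly at fixed pairs $(k,\ell)$, checking that the limits in $k$ and in $\ell$ are genuinely almost sure, and matching the two displayed hypotheses to exactly the inputs that the subsequence criterion consumes at each of the two stages. Verifying that the Gaussian field $(G_u)$ from \cref{sec:limit} satisfies the integrability and tail hypotheses is a separate matter, carried out where the lemma is applied.
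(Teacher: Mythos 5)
Your proposal is correct and follows essentially the same route as the paper: the paper's proof consists of exactly the decomposition $\int_{\bbR^m_+} Y_u^n \Di u = X_{n,k,\ell} + (X_{n,\ell} - X_{n,k,\ell}) + \text{(tail terms over the complement of $[0,\ell]^m$)}$ and then defers to \cite[Lemma~1]{LjunANot}, whose argument is precisely your double application of the subsequence criterion (continuous mapping on the finite lattice sum, almost-sure refinement in $k$ by uniform continuity on compacts, Markov plus the tail hypothesis in $\ell$). Your write-up merely fills in the details the paper leaves to the cited reference.
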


\begin{proof}
  Observe for each $\ell > 0$ the decomposition
  \begin{equation*}
    \int_{\bbR^m_+} Y_u^n \Di u = X_{n, k, \ell} + (X_{n, \ell} - X_{n, k, \ell})
    + \sum_{i = 0}^{m - 1} \int_{\bbR_+^{m - 1 - i}} \int_{\ell}^{\infty} \int_{[0, \ell]^i} Y_u^n \Di u.
  \end{equation*}
  Conclude now as in \cite[Lemma~1]{LjunANot}.
\end{proof}

\subsubsection{Convergence of the estimator}
\label{sec4.1.3}

First, $\xi_n \asTo \xi_0$ follows by standard arguments which in
particular uses Assumption~(A), see, e.g. \cite{LjunANot}, where one
uses
\begin{equation}
  \label{eq:norm-conv}
  \norm{\varphi_n - \varphi_{\xi_0}}_w \asTo 0 \qquad \text{as $n \to \infty$,}
\end{equation}
which is a consequence of Lebesgue's dominated convergence
theorem. Indeed, denote by $\lambda$ the Lebesgue measure on
$\bbR^m$. For \cref{eq:norm-conv} it is by dominated convergence
enough to prove that there exists a $\bbP$-null set $N$ such that for
all $\omega \in \Omega \setminus N$
\begin{equation*}
  \varphi_n(u, \omega) \to \varphi_{\xi_0}(u) \qquad \text{for $\lambda$-almost all $u \in \bbR_+^m$.}
\end{equation*}
To see this set
$A = \Set{(\omega, u) \mid \varphi_n(u, \omega) \not\to
  \varphi_{\xi_0}(u)}$ and note that by Tonelli's~theorem:
\begin{align*}
  \int_{\Omega} \lambda(\Set{u \mid \varphi_n(u, \omega) \not\to \varphi_{\xi_0}(u) }) \isp \bbP(\di \omega)
  &= \int_{\Omega} \int_{\bbR^m} \1_A(\omega, u) \isp \lambda(\di u) \isp \bbP(\di \omega)
  \\
  &= \int_{\bbR^m} \bbP(\Set{\omega \mid \varphi_n(u, \omega) \not\to \varphi_{\xi_0}(u)}) \isp \lambda(\di u)
    = 0,
\end{align*}
where the last equality follows from Birkhoff's ergodic theorem which
states that for each $u \in \bbR^m$, then
\begin{equation*}
  \varphi_n(u) \to \varphi_{\xi_0}(u) \qquad \text{$\bbP$-almost surely as $n \to \infty$.}
\end{equation*}

To derive the central limit theorem for the estimator, we consider
instead the requirement
\begin{equation*}
  \nabla_{\xi} F(\varphi, \xi) = 0 \qquad \varphi \in \cL_w^2(\bbR_+^m), \quad \xi \in \Xi
\end{equation*}
which is satisfied at $(\varphi_{\xi_0}, \xi_0)$. The problem may now
be viewed from a implicit functional point of view. To this end we
recall the implicit function theorem on general
Banach~spaces. Consider a Fréchet differentiable map
$g : U_1 \times U_2 \to B_3$ where $U_1$ and $U_2$ are open subsets of
the Banach spaces $B_1$ and $B_2$, respectively, and $B_3$ is an
additional Banach space. Let~$D^i_{h_i} g(p_1, p_2)$,
$i \in \Set{1, 2}$, denote the partial derivatives at the point
$(p_1, p_2) \in U_1 \times U_2$ in the direction $h_i \in B_i$. If
$(p_1^0, p_2^0) \in U_1 \times U_2$ is a point such that
$g(p_1^0, p_2^0) = 0$ and the map
$h \mapsto D^2_h g(p_1^0, p_2^0) : B_2 \to B_3$ is a continuous and
invertible function, then there exists open subsets
$V_1 \subseteq U_1$ and $V_2 \subseteq U_2$ such that
$(p_1^0, p_2^0) \in V_1 \times V_2$ and a Fréchet differentiable and
bijective (implicit) function $\Phi : V_1 \to V_2$ such that
\begin{equation*}
  g(p_1, p_2) = 0 \iffx \Phi(p_1) = p_2.
\end{equation*}
In addition, the derivative is given by
\begin{equation}
  \label{eq:derivative-implicit}
  D_h \Phi(p) = - \bigl(D^2 g(p, \Phi(p)) \bigr)^{-1} \bigl(D_h^1 g(p, \Phi(p)) \bigr), \qquad h \in B_1, \quad p \in V_1.
\end{equation}
As might be apparent we shall consider the specific setup of
$g = \nabla_{\xi} F$, $B_1 = U_1 = \cL^2_w(\bbR_+^m)$,
$U_2 = \Xi \subseteq B_2 = \bbR^{d + 1}$. We note that
Assumption~(B)\cref{it:ass:B:2} ensures the existence and continuity
of the first and second order derivatives of $F$. Moreover,
Assumption~(A)\cref{it:ass:A:4} yields the invertibility of the
Hessian $\nabla^2_\xi F(\varphi_{\xi_0}, \xi_0)$.

In this case
\begin{equation*}
  \Phi(\varphi_n) = \xi_n \qtq{and} \Phi(\varphi_{\xi_0}) = \xi_0.
\end{equation*}
Hence, by Fréchet differentiability we find that
\begin{align*}
  \sqrt{n} (\xi_n - \xi_0) &= \sqrt{n} (\Phi(\varphi_{\xi_0} + (\varphi_n - \varphi_{\xi_0}))
                             - \Phi(\varphi_{\xi_0}))
  \\
                           &=  D_{\sqrt{n} (\varphi_n - \varphi_{\xi_0})} \Phi(\varphi_{\xi_0})
                             + \sqrt{n} \norm{\varphi_n - \varphi_{\xi_0}}_{w, 2} R(\varphi_n - \varphi_{\xi_0}),
\end{align*}
where the remainder term satisfies that
$R(\varphi_n - \varphi_{\xi_0}) \asTo 0$ as
$\norm{\varphi_n - \varphi_{\xi_0}}_{w, 2} \asTo 0$. Recalling the
derivative at \cref{eq:derivative-implicit} and the representation
$F(\varphi, \xi) = \iprod{\varphi - \varphi_\xi, \varphi -
  \varphi_\xi}_w$, it suffices to prove that
\begin{equation*}
  \begin{aligned}
    \sqrt{n} \norm{\varphi_n - \varphi_{\xi_0}}_{w, 2} &\distTo \norm{G}_{w, 2}
    \\
    (\iprod{\partial_{\xi}^i \varphi_{\xi_0}, \sqrt{n}(\varphi_n - \varphi_{\xi_0})}_w)_{i = 1, \ldots, d + 1} &\distTo (\iprod{\partial_{\xi}^i \varphi_{\xi_0}, G}_w)_{i = 1 \ldots, d + 1}.
  \end{aligned}
\end{equation*}
We focus on the last convergence since they are shown similarly. For
this we wish to use (a vector version of) \cref{lem:int-conv} which
requires the finite dimensional convergence of the vector-valued
process
\begin{equation*}
  Z_u^n = \bigl(\partial_{\xi}^i \varphi_{\xi_0}(u) w(u) \sqrt{n} (\varphi_n(u) - \varphi_{\xi_0}(u))\bigr)_{i = 1, \ldots, d + 1}.
\end{equation*}
But since it is the same underlying process,
$(\sqrt{n} (\varphi_n(u) - \varphi_{\xi_0}(u)))_{u \in \bbR_+^m}$,
this simply follows from the continuous mapping theorem in conjunction
with the finite dimensional convergence observed at
\cref{eq:fidi-conv-char-functions}. A small generalization of
\cref{lem:int-conv} shows that is sufficient to provide suitable
moment estimates for each individual coordinate, that is, estimates
for
\begin{equation*}
  Y_u^n \coloneqq \partial_{\xi}^i \varphi_{\xi_0}(u) w(u) \sqrt{n} (\varphi_n(u) - \varphi_{\xi_0}(u))
  \eqqcolon h(u) G_u^n, \qquad u \in \bbR_+^m, \quad n \in \bbN,
\end{equation*}
where $i \in \Set{1, \ldots, d + 1}$ is fixed and $h$ and $G^n$ are
defined respectively as
\begin{equation*}
  h(u) = \partial_{\xi}^i \varphi_{\xi_0}(u) w(u) \qtq{and}
  G_u^n = \sqrt{n} (\varphi_n(u) - \varphi_{\xi_0}(u)).
\end{equation*}
Note that $h$ is continuous by Assumption~(A)\cref{it:ass:A:4} and
since the weight function is continuous.

Using arguments as in \cite[Section~4.2]{LjunANot} and the variance
estimates from \cref{sec:limit} we deduce that
\begin{equation*}
  \Expt[\big]{\abs{Y_u^n}^2}
  \leq (\partial_{\xi}^i \varphi_{\xi_0}(u) w(u))^2 \sum_{\ell \in \bbZ} \abs{r_\ell(u, u)}
  \leq C \norm{u}^{\beta} (\partial_{\xi}^i \varphi_{\xi_0}(u) w(u))^2.
\end{equation*}
Taking the square root we obtain a bound in $\cL^1(\bbR_+^m)$ of
$\Expt{\abs{Y_u^n}}$ by the Cauchy--Schwarz inequality used together with
Assumption~(B)\cref{it:ass:B:2} and that $u \mapsto \norm{u}$ is an
element of $\cL_w^2(\bbR_+^m)$. Hence the first condition of
\cref{lem:int-conv} is satisfied. The second condition is slightly
more involved, but let a $\ell > 0$ be given and consider any
$u, v \in [0, \ell]^m$. Then
\begin{align*}
  \Expt[\big]{\abs{Y_u^n - Y_v^n}^2}^{1/2} &\leq \abs{h(u) - h(v)} \var[G_u^n]^{1/2}
                                             + \abs{h(v)} \cov(G_u^n, G_v^n)^{1/2}
  \\
                                           &\leq C_\ell(\abs{h(u) - h(v)} + \norm{u - v} ),
\end{align*}
which by Markov's inequality yields the second condition of
\cref{lem:int-conv}.

\subsection{Proof of Remark~\ref{rem:ass:A}\ref{it:rem:ass:A:4}}
\label{sec:remark-proof}

Let $\Set{g_\xi \given \xi \in \Xi}$ be a family of measurable
functions such that conditions \cref{it:ass:A:1,it:ass:A:3} from
Assumption~(A) holds. Then condition~(A)\cref{it:ass:A:4} holds if and
only if
$u \mapsto \partial_\xi^2 \varphi_\xi(u), \ldots, \partial_{\xi}^{d +
  1} \varphi_{\xi}(u)$ are linearly independent. The only if part is
trivial so we consider solely the if statement. Suppose therefore that
there exists constants $a_1, \ldots, a_{d + 1} \in \bbR$ such that
\begin{equation}
  \label{eq:linear-independence}
  a_1 \partial_\xi^1 \varphi_\xi(u) + \cdots + a_{d + 1} \partial_\xi^{d + 1} \varphi_\xi(u) = 0 \qquad \text{for all $u \in \bbR_+^m$.}
\end{equation}
We note first that for any $i \in \Set{1, \ldots, d + 1}$ and
$u_1 > 0$
\begin{align}
  \partial^i_\xi \varphi_\xi(u_1, 0, \ldots, 0)
  &= \varphi_\xi(u_1, 0, \ldots, 0) \partial_\xi^i \bigl(u_1^\beta \norm{g_\xi}_{\beta}^{\beta}\bigr)
    \nonumber
  \\
  &= \varphi_\xi(u_1, 0, \ldots, 0) \times
    \begin{cases}
      u_1^\beta \partial_{\xi}^i \norm{g_\xi}_\beta^\beta, & \text{if
        $i \neq 1$,}
      \\
      u_1^\beta \log(u_1) \norm{g_\xi}_\beta^{\beta} + u_1^\beta \partial_\xi^1 \norm{g_\xi}_\beta^\beta, & \text{if $i = 1$.}
    \end{cases}
                                                                                                            \label{eq:phi-derivative}
\end{align}
Since $\varphi_\xi(u) \neq 0$ for all $u \in \bbR_+^m$ it follows from
\cref{eq:phi-derivative} that
\begin{equation*}
  \frac{\partial_\xi^i \varphi_\xi(u_1, 0, \ldots, 0)}{\varphi_\xi(u_1, 0, \ldots, 0) u_1^\beta \log(u_1)} \xrightarrow[u_1 \to \infty]{}
  \begin{cases}
    0, & \text{if $i \neq 1$,}
    \\
    \norm{g_\xi}_\beta^\beta, & \text{if $i = 1$.}
  \end{cases}
\end{equation*}
This proves that $a_1 = 0$ using condition~(A)\cref{it:ass:A:1}. Then
\cref{eq:linear-independence} reduces to
\begin{equation*}
  a_2 \partial_\xi^2 \varphi_\xi(u) + \cdots + a_{d + 1} \partial_\xi^{d + 1} \varphi_\xi(u) = 0 \qquad \text{for all $u \in \bbR_+^m$,}
\end{equation*}
and $a_2 = \cdots = a_{d + 1} = 0$ follows by the assumption of linear
independence of the subset.

\subsection{Proof of statements in Example~\ref{ex:periodic-example}}
\label{sec:example-proofs}

Consider the kernel\footnote{Similar considerations can be done for
  the Ornstein--Uhlenbeck kernel, albeit easier and more explicit.}
$g_\theta(u) = \exp(- \theta_1 u - \theta_2 f(u))\1_{(0,\infty)}(u)$
for $\theta = (\theta_1, \theta_2) \in (0, \infty)^2$ and where $f$ is
a bounded measurable $1$-periodic function which does not vanish
except on a Lebesgue null set. Assume moreover that $f$ is either
non-positive and or non-negative. It is straightforward to see that in
this case the characteristic function of $X_1$ does not determine the
parameter $\theta$ uniquely. Consider instead the joint characteristic
function $\varphi_{\beta, \theta}(u_1, u_2)$ of $(X_1, X_2)$ for the
moving average $X$ with kernel $g_\theta$, which is given by:
\begin{equation*}
  \varphi_{\beta, \theta}(u_1, u_2)
  = \exp\bigl(-\norm{u_1 g_\theta + u_2 g_\theta(\cdot + 1)}_\beta^\beta \bigr),
  \qquad u_1, u_2 \geq 0.
\end{equation*}
If $\varphi_{\beta, \theta} = \varphi_{\beta, \tilde{\theta}}$ for
$\theta, \tilde{\theta} \in (0, \infty)^2$, then the $\beta$-norms
must be equal. Recalling the generalized binomial theorem
\begin{equation*}
  (x + y)^\beta = \sum_{k = 0}^{\infty} \binom{\beta}{k} x^{\beta - k} y^k \qquad x > y \geq 0
\end{equation*}
we may calculate these norms explicitly for $u_1 > u_2 \geq 0$:
\begin{align*}
  \MoveEqLeft[1] \norm{u_1 g_\theta + u_2 g_\theta(\cdot + 1)}_\beta^\beta
  \\
  &= u_2^\beta \int_{0}^{1} \exp(-\theta_1 x - \theta_2 f(x)) \Di x
  \\
  &\quad + \int_{0}^{\infty} \sum_{k = 0}^{\infty} \binom{\beta}{k} u_1^{\beta - k} u_2^k \exp(-(\beta - k) (\theta_1 x + \theta_ 2 f(x)) - k (\theta_1 (x + 1) + \theta_2 f(x + 1))) \Di x
  \\
  &= u_2^\beta \int_{0}^{1} \exp(-\theta_1 x - \theta_2 f(x)) \Di x
  \\
  &\quad + \int_{0}^{\infty} \sum_{k = 0}^{\infty} \binom{\beta}{k} u_1^{\beta - k} u_2^k \exp(-\beta (\theta_1 x + \theta_ 2 f(x)) - k \theta_1) \Di x
  \\
  &= u_2^\beta \int_{0}^{1} \exp(-\theta_1 x - \theta_2 f(x)) \Di x
    + (u_1 + u_2 \exp(-\theta_1))^\beta \int_{0}^{\infty} \exp(-\beta(\theta_1 x + \theta_2 f(x))) \Di x
\end{align*}
where the last equality follows from the generalized binomial theorem
since $u_1 > u_2 \geq u_2 \exp(-\theta_1)$. Hence if
$\varphi_{\beta, \theta} = \varphi_{\beta, \tilde{\theta}}$ then for
all $u_1 > u_2 \geq 0$
\begin{equation*}
  1 = \frac{u_2^\beta \int_{0}^{1} \exp(-\theta_1 x - \theta_2 f(x)) \Di x
    + (u_1 + u_2 \exp(-\theta_1))^\beta \int_{0}^{\infty} \exp(-\beta(\theta_1 x + \theta_2 f(x))) \Di x}{u_2^\beta \int_{0}^{1} \exp(-\tilde{\theta}_1 x - \tilde{\theta}_2 f(x)) \Di x
    + (u_1 + u_2 \exp(-\tilde{\theta}_1))^\beta \int_{0}^{\infty} \exp(-\beta(\tilde{\theta}_1 x + \tilde{\theta}_2 f(x))) \Di x}.
\end{equation*}
Inserting $u_1 = 1 > 0 = u_2$ yields the identity:
\begin{equation*}
  K \coloneqq  \int_{0}^{\infty} \exp(-\beta(\theta_1 x + \theta_2 f(x))) \Di x
  = \int_{0}^{\infty} \exp(-\beta(\tilde{\theta}_1 x + \tilde{\theta}_2 f(x))) \Di x
\end{equation*}
hence it suffices to prove that $\theta_1 =
\tilde{\theta}_1$. Moreover, inserting the above identity in
$\varphi_{\beta, \theta} = \varphi_{\beta, \tilde{\theta}}$ and
differentiating with respect to $u_1$ gives that for all $u_1 > u_2$:
\begin{equation*}
  (u_1 + u_2 \exp(-\theta_1))^{\beta - 1} K = (u_1 + u_2 \exp(-\tilde{\theta}_1))^{\beta - 1} K,
\end{equation*}
which proves that $\theta_1 = \tilde{\theta}_1$ if $\beta \neq 1$.

Let us additionally show that $u \mapsto \partial_{\xi}^2 \varphi_\xi$
and $u \mapsto \partial_{\xi}^3 \varphi_\xi$ are linearly independent
if the $1$-periodic function is negative and bounded and
$\beta \neq 1$. Indeed, by \cref{rem:ass:A}\cref{it:rem:ass:A:4} this
is equivalent to Assumption~(A)\cref{it:ass:A:4}. Due to their
exponential form these derivatives are linearly independent \emph{if}
the following functions (note that we only have an explicit formula
when $u_1 > u_2 \geq 0$) are linearly independent in
$u_1 > u_2 \geq 0$:
\begin{align*}
  \partial_{\xi}^2 \norm{u_1 g_\theta + u_2 g_{\theta}(\cdot + 1)}_\beta^\beta
  &= - K_{\theta, 1} u_2^\beta  - K_{\theta, 2} (u_1 + u_2)^{\beta - 1} u_2
    - K_{\theta, 3} (u_1 + u_2 \exp(-\theta_1))^\beta
  \\
  \partial_{\xi}^3 \norm{u_1 g_\theta + u_2 g_{\theta}(\cdot + 1)}_\beta^\beta
  &= K_{\theta, 4} u_2^\beta + K_{\theta, 5} (u_1 + u_2 \exp(-\theta_1))^\beta,
\end{align*}
where the constant $K_{\theta, 1}, \ldots, K_{\theta, 5}$ are strictly
positive, indeed the only constants which are not in general positive
are:
\begin{align*}
  K_{\theta, 5} &= - \int_{0}^{\infty} \beta \theta_2 f(x) \exp(-\beta(\theta_1 x + \theta_2 f(x))) \Di x
  \\
  K_{\theta, 4} &= - \int_{0}^{1}  f(x) \exp(-\theta_1 x - \theta_2 f(x)) \Di x
\end{align*}
but they are by our assumption $f < 0$. The main observation needed is
that these functions are of different order in $u_1$ when $u_2 \neq 0$
and that their constants are of opposite sign. Indeed, for
$a, b \in \bbR$ we have that
\begin{align*}
  0 &= \bigl(a \partial_{\xi}^2 \norm{u_1 g_\theta + u_2 g_{\theta}(\cdot + 1)}_\beta^\beta + b \partial_{\xi}^3 \norm{u_1 g_\theta + u_2 g_{\theta}(\cdot + 1)}_\beta^\beta \bigr) \bigm/ u_1^\beta
  \\
    &\xrightarrow[u_1 \to \infty]{}
      -a K_{\theta, 3} + b K_{\theta, 5}.
\end{align*}
The constants $a K_{\theta, 3}$ and $b K_{\theta, 5}$ must then be
same and we have the following major simplification:
\begin{align*}
  0 &= a \partial_{\xi}^2 \norm{u_1 g_\theta + u_2 g_{\theta}(\cdot + 1)}_\beta^\beta
      + b \partial_{\xi}^3 \norm{u_1 g_\theta + u_2 g_{\theta}(\cdot + 1)}_\beta^\beta
  \\
    &= - (a K_{\theta, 1} - b K_{\theta, 4}) u_2^\beta  - a K_{\theta, 2} (u_1 + u_2)^{\beta - 1} u_2.
\end{align*}
If $\beta > 1$ then this is clearly unbounded in $u_1$, hence
$a = 0$, and therefore $b = 0$ as well since $K_{\theta, 4} > 0$. If
$\beta < 1$ then differentiating with respect to $u_1$ yields the
simple equation:
\begin{equation*}
  0 = a K_{\theta, 2} (u_1 + u_2)^{\beta - 2} u_ 2 \qquad \text{for all $u_1 > u_2 \geq 0$,}
\end{equation*}
which yields $a = 0$ and therefore $b = 0$ since again
$K_{\theta, 4} > 0$.

\subsection{Proof of statements in Example~\ref{ex:modulated-OU}}
\label{sec:proof-modulated-OU}

Recall the moving average kernel from \cref{eq:modulated-OU}. First,
we show that the one-dimensional characteristic function is not enough
to identify $\theta = (\theta_1, \theta_2)$. Indeed, we see that for
two parameters $\theta$, $\tilde{\theta} \in (0, \infty)^2$ equality
of the one-dimensional characteristic functions gives
\begin{equation}
  \label{eq:scale-id}
  \begin{aligned}
    \frac{\theta_1^\beta \Gamma(\beta + 1)}{(\beta \theta_2)^{\beta + 1}}
    &= \int_{0}^{\infty} (\theta_1 s \exp(-\theta_2 s) )^\beta \Di s
    \\
    &= \int_{0}^{\infty} (\tilde{\theta}_1 s \exp(-\tilde{\theta}_2 s) )^\beta \Di s
      =   \frac{\tilde{\theta}_1^\beta \Gamma(\beta + 1)}{(\beta \tilde{\theta}_2)^{\beta + 1}}.
  \end{aligned}
\end{equation}
We claim that the two-dimensional characteristic function is enough to
identify $\theta$. For~this we recall the covariation between $X_1$
and $X_0$, cf. \cite[Section~2.7]{SamoStab}, which is uniquely
determined by the distribution of $(X_1, X_0)$ and hence by its joint
characteristic function. If $\theta$ denotes the underlying parameter
for the moving average $X$ and $\beta>1$, then the covariation is,
cf.~\cite[Proposition~3.5.2]{SamoStab},
\begin{equation}
  \label{eq:cov-id}
  \begin{aligned}
    [X_1, X_0]_\beta &= \int_{\bbR} g_\theta(s + 1) g_\theta(s)^{\beta - 1} \Di s
                       = \theta_1^\beta \int_{0}^{\infty} (s + 1) \e^{-\theta_2 (s + 1)} s^{\beta - 1} \e^{- (\beta - 1) \theta_2 s} \Di s                
    \\
                     &= \theta_1^\beta \e^{-\theta_2} \Bigl[ \int_{0}^{\infty} s^\beta \e^{-\beta \theta_2 s} \Di s
                       + \int_{0}^{\infty} s^{\beta - 1} \e^{-\beta \theta_2 s} \Di s
                       \Bigr]                   
    \\
                     &= \theta_1^\beta \e^{-\theta_2} \Bigl[\frac{\Gamma(\beta + 1)}{(\beta \theta_2)^{\beta + 1}}
                       + \frac{\Gamma(\beta)}{(\beta \theta_2)^\beta}
                       \Bigr]
    \\
                     &= \frac{\theta_1^\beta \Gamma(\beta + 1)}{(\beta \theta_2)^{\beta + 1}} \e^{-\theta_2} (1 + \theta_2),
  \end{aligned}
\end{equation}
where we used the defining property:
$\beta \Gamma(\beta) = \Gamma(\beta + 1)$. Hence if $\theta$ and
$\tilde{\theta}$ leads to the same distribution of $(X_1, X_0)$, then
combining the identities \cref{eq:scale-id,eq:cov-id} yields
\begin{equation*}
  (1 + \theta_2) \e^{-\theta_2} = (1 + \tilde{\theta}_2) \e^{-\tilde{\theta}_2}.
\end{equation*}
It is straightforward to check that the function
$x \mapsto (1 + x) \e^{-x}$ is strictly decreasing on $(0, \infty)$,
and therefore injective, which proves that
$\theta_2 = \tilde{\theta}_2$ and therefore
$\theta_1 = \tilde{\theta}_1$ as well, cf.~\cref{eq:scale-id}.

Let us now check the condition~(A)\cref{it:ass:A:4}. According to
\cref{rem:ass:A}\cref{it:rem:ass:A:4} it suffices to prove linear
independence of the functions $\partial_{\xi}^2 \varphi_\xi$ and
$ \partial_{\xi}^3 \varphi_\xi$. We obtain the identities
\begin{align*}
  \partial_{\xi}^2 \norm{u_1 g_\theta + u_2 g_{\theta}(\argmrk + 1)}_\beta^\beta
  &= \beta \theta_1^{\beta-1}
    \int_{\bbR} \Bigl(u_1 x \exp(-\theta_2 x) \1_{(0, \infty)}(x)
  \\
  &\quad+ u_2 (x + 1) \exp(-\theta_2 (x + 1)) \1_{(0, \infty)}(x + 1) \Bigr)^{\beta} \Di x,
  \\
  \partial_{\xi}^3 \norm{u_1 g_\theta + u_2 g_{\theta}(\argmrk + 1)}_\beta^\beta
  &= -\beta \theta_1^{\beta}
    \int_{\bbR} \Bigl(u_1 x \exp(-\theta_2 x) \1_{(0, \infty)}(x)
  \\
  &\quad + u_2 (x + 1) \exp(-\theta_2 (x + 1)) \1_{(0, \infty)}(x+1) \Bigr)^{\beta - 1}
  \\
  &\quad \times \Bigl( u_1 x^2 \exp(-\theta_2 x) \1_{(0, \infty)}(x)
  \\
  &\quad + u_2 (x + 1)^2\exp(-\theta_2 (x + 1)) \1_{(0, \infty)}(x + 1) \Bigr) \Di x.
\end{align*}
Notice that it suffices to show linear independence of the functions
\begin{equation*}
  f_1(u_1, u_2) \coloneqq \frac{\partial_{\xi}^2 \norm{u_1 g_\theta + u_2 g_{\theta}(\cdot + 1)}_\beta^\beta}{\beta \theta_1^{\beta-1}},
  \qquad
  f_2(u_1, u_2) \coloneqq \frac{\partial_{\xi}^3 \norm{u_1 g_\theta + u_2 g_{\theta}(\cdot + 1)}_\beta^\beta}{\beta \theta_1^{\beta}}.
\end{equation*}
Assume that there exist a constant $r$ such that
$f_1(u_1, u_2) + r f_2(u_1, u_2) = 0$ for any $u_1 > u_2 \geq
0$. Next, setting $u_2 = 0$, we obtain the identities
\begin{align*} 
  f_1(u_1, 0) &= u_1^{\beta} \int_0^{\infty} x^{\beta} \exp(-\beta \theta_2 x) \Di x
                = u_1^{\beta} (\beta \theta_2)^{-\beta -1} \Gamma(\beta + 1),
  \\
  f_2(u_1, 0) &= -u_1^{\beta} \int_0^{\infty} x^{\beta + 1} \exp(-\beta \theta_2 x) \Di x
                = -u_1^{\beta} (\beta \theta_2)^{-\beta -2} \Gamma(\beta + 2). 
\end{align*} 
Hence, it must hold that 
\begin{equation*}
  r = \frac{\beta \theta_2}{\beta + 1}.
\end{equation*}
In the next step we will show that
$f_1(u_1, \exp(\theta_2)) + r f_2(u_1, \exp(\theta_2)) \to \infty$ as
$u_1 \to \infty$, which leads to the desired contradiction. Recall
that $f_1(u_1, 0) + r f_2(u_1, 0) = 0$ and hence we may instead
consider
$f_1(u_1, \exp(\theta_2)) - f_1(u_1, 0) + r (f_2(u_1, \exp(\theta_2))
- f_2(u_1, 0))$. Applying~the mean value theorem we conclude that
\begin{equation*}
  f_1(u_1, \exp(\theta_2)) - f_1(u_1, 0)
  = \int_0^{1} x^{\beta} \exp(-\beta \theta_2 (x - 1)) \Di x + u_1^{\beta - 1} q_1
  + o(u_1^{\beta - 1}), 
\end{equation*}
where
\begin{equation*}
  q_1 = \beta \int_0^{\infty} x^{\beta - 1} (x + 1) \exp(-\beta \theta_2 x) \Di x.
\end{equation*}
Similarly, we deduce that
\begin{equation*}
  f_2(u_1,\exp(\theta_2)) - f_2(u_1, 0)
  = -\int_0^{1} x^{\beta + 1} \exp(-\beta \theta_2 (x - 1)) \Di x
  + u_1^{\beta - 1} q_2 + o(u_1^{\beta - 1})
\end{equation*}
with
\begin{equation*}
  q_2 = -\int_0^{\infty} \bigl(\beta x^{\beta + 1}
  + (\beta + 1) x^{\beta}
  + x^{\beta - 1} \bigr) \exp(-\beta \theta_2 x) \Di x.
\end{equation*}
Since $u_1^{\beta - 1} \to \infty$ as $u_1 \to \infty$ because
$\beta > 1$, we only need to prove that $q_1 + r q_2 \neq 0$. We have
that
\begin{align*}
  q_1 &= \beta \bigl( (\beta \theta_2)^{-\beta - 1} \Gamma(\beta+1) + (\beta \theta_2)^{-\beta} \Gamma(\beta) \bigr),
  \\
  q_2 &= - \bigl(\beta (\beta \theta_2)^{-\beta - 2} \Gamma(\beta + 2) + (\beta + 1) (\beta \theta_2)^{-\beta - 1} \Gamma(\beta + 1) + (\beta \theta_2)^{-\beta} \Gamma(\beta) \bigr).
\end{align*}
A straightforward calculation shows that
\begin{equation*}
  q_1 + r q_2
  = - r (\beta \theta_2)^{-\beta} \Gamma(\beta) < 0.
\end{equation*}
Consequently, we have a contradiction and the functions $f_1$ and
$f_2$ are linearly independent.

\subsection{Proof of statements in Example~\ref{ex:CARMA}}
\label{sec:proof-CARMA}

\noindent We consider a CARMA($2, 1$) model of the form
\begin{equation*}
  X_t = \int_{-\infty}^{t} b^{\top} \exp(A(t - s)) e \Di L_s, \qquad t \in \bbR,
\end{equation*}
where $b = (b_0, 1)^{\top}$, $e = (0, 1)^{\top}$, $L$ is a symmetric
$\beta$-stable Lévy process with $\beta \in (1,2)$, and
\begin{equation*}
  A = 
  \begin{pmatrix}
    0 & 1
    \\
    -\lambda^2 & 2\lambda
  \end{pmatrix}
\end{equation*}
with $\lambda<0 $. We further assume that $\theta=b_0+\lambda>0$. 
Recall the definition of the incomplete gamma function:
\begin{equation*}
  \Gamma(\beta;x) = \int_x^{\infty} y^{\beta-1} \exp(-y) \Di y, \qquad \beta, x > 0.
\end{equation*}
The following identity is due to partial integration:
$\Gamma(\beta + 1; x) = \beta \Gamma(\beta; x) + x^{\beta} \exp(-x)$,
or in other words
\begin{equation}
  \label{Gammaid}
  \Gamma(\beta; x) = \beta^{-1} (\Gamma(\beta + 1; x) -  x^{\beta} \exp(-x)).
\end{equation}
The one-dimensional characteristic function of $X_1$ uniquely
determines the term
\begin{align*}
  \int_{\bbR} \abs{g_\xi(x)}^{\beta} \Di x &= \int_0^{\infty} (1+\theta x)^{\beta} \exp(\lambda \beta x) \Di x
                                             = \left(\theta \exp(-\lambda \theta^{-1})\right)^{\beta} \int_{\theta^{-1}}^{\infty} y^{\beta} \exp(\lambda \beta y) \Di y
  \\
                                           &= -\frac{1}{\lambda \beta}\left(-\frac{\theta \exp(-\lambda \theta^{-1})}{\lambda \beta} \right)^{\beta} \Gamma(\beta+1; -\lambda \beta \theta^{-1})
                                             \eqqcolon c. 
\end{align*}
Now, we compute the covariation $[X_1, X_0]_{\beta}$:
\begin{align*}
  [X_1, X_0]_{\beta} &= \int_{\bbR} g_\xi(x + 1) g_\xi(x)^{\beta -1} \Di x
  \\
                     &= \int_0^{\infty} (1 + \theta (x + 1)) \exp(\lambda(x + 1)) (1 + \theta x)^{\beta - 1} \exp(\lambda (\beta - 1)x) \Di x
  \\
                     &= -\frac{1}{\lambda \beta} \left(-\frac{\theta \exp(-\lambda \theta^{-1})}{\lambda \beta} \right)^{\beta} \exp(\lambda) 
                       \left(  \Gamma(\beta + 1; -\lambda \beta \theta^{-1}) -\lambda \beta
                       \Gamma(\beta; -\lambda \beta \theta^{-1})   \right)
  \\
                     &= \exp(\lambda) (c(1 - \lambda) - \beta^{-1}),
\end{align*}
where we used the formula \eqref{Gammaid}. Since $c$ is uniquely
determined, the quantity $[X_1, X_0]_{\beta} $ identifies the
parameter $\lambda$ (note that $-c \lambda - \beta^{-1} > 0$, and in
particular this term is never equal to
0). Condition~(A)\cref{it:ass:A:4} is shown similarly to the previous
example.

\section*{Acknowledgement}
The authors acknowledge financial support from the project
\enquote{Ambit fields: probabilistic properties and statistical
  inference} funded by Villum Fonden.

\printbibliography

\end{document}
